\newtheorem{theorem}{Theorem}[section]
\theoremstyle{remark}
\newtheorem{remark}[theorem]{Remark}
\newtheorem{question}[theorem]{Question}
\numberwithin{equation}{section}
\newcommand{\N}{\mathbb{N}}
\newcommand{\R}{\mathbb{R}}
\newcommand{\C}{\mathbb{C}}
\begin{document}

\title[\tiny Modes of convergence of sequences of holomorphic functions]{Modes of convergence of sequences of holomorphic functions: a linear point of view}

\author[Bernal]{L.~Bernal-Gonz\'alez}
\address[L. Bernal-Gonz\'alez]{\mbox{}\newline \indent Departamento de An\'alisis Matem\'atico \newline \indent Facultad de Matem\'aticas
\newline \indent Instituto de Matem\'aticas de la Universidad de Sevilla (IMUS)
\newline \indent Universidad de Sevilla
\newline \indent Avenida Reina Mercedes s/n, 41012-Sevilla (Spain).}
\email{lbernal@us.es}

\author[Calder\'on]{M.C.~Calder\'on-Moreno}
\address[M.C.~Calder\'on-Moreno]{\mbox{}\newline \indent Departamento de An\'alisis Matem\'atico \newline \indent Facultad de Matem\'aticas
\newline \indent Instituto de Matem\'aticas de la Universidad de Sevilla (IMUS)
\newline \indent Universidad de Sevilla
\newline \indent Avenida Reina Mercedes s/n, 41012-Sevilla (Spain).}
\email{mccm@us.es}

\author[L\'opez-Salazar]{J.~L\'opez-Salazar}
\address[J.~L\'opez-Salazar]{\mbox{}\newline \indent Departamento de Matem\'atica Aplicada a las Tecnolog\'ias  \newline \indent de la Informaci\'on y de las Comunicaciones
\newline \indent Escuela T\'ecnica Superior de Ingenier\'ia y Sistemas de Telecomunicaci\'on
\newline \indent Universidad Polit\'ecnica de Madrid
\newline \indent Nikola Tesla s/n, 28031-Madrid (Spain).}
\email{jeronimo.lopezsalazar@upm.es}

\author[Prado]{J.A.~Prado-Bassas}
\address[J.A.~Prado-Bassas]{\mbox{}\newline \indent Departamento de An\'alisis Matem\'atico
\newline \indent Facultad de Matem\'aticas
\newline \indent Instituto de Matem\'aticas de la Universidad de Sevilla (IMUS)
\newline \indent Universidad de Sevilla
\newline \indent Avenida Reina Mercedes s/n, 41012-Sevilla (Spain).}
\email{bassas@us.es}

\subjclass[2020]{15A03, 30H99, 46E10, 46B87}

\keywords{Pointwise and compact convergence of sequences of holomorphic functions, dense lineability, algebrability, spaceability}

\begin{abstract}
In this paper, pointwise convergence, uniform convergence and compact convergence of sequences of holomorphic functions on an open subset of the complex plane are compared from a linear point of view. In fact, it is proved the existence of large linear algebras consisting, except for zero, of sequences of holomorphic functions tending to zero compactly but not uniformly on the open set or of sequences of holomorphic functions tending pointwisely to zero but not compactly. Also dense linear subspaces in an appropriate Fr\'echet space as well as infinite dimensional Banach spaces of sequences converging to zero in the mentioned modes are shown to exist.
\end{abstract}

\maketitle

\section{Introduction}

\quad Along this paper, \,$\Omega$ \,will stand for a nonempty open subset of the complex plane \,$\C$, while \,$H(\Omega)$ \,will represent, as usual, the vector space of all holomorphic functions on \,$\Omega$. When endowed with the topology of the uniform convergence on each compact subset of \,$\Omega$ \,(also called compact convergence), \,$H(\Omega)$ \,becomes a Fr\'echet space. We shall consider three modes of convergence of sequences \,$(f_n)$ \,of holomorphic functions on \,$\Omega$: pointwise convergence, compact convergence, and uniform convergence on the whole domain \,$\Omega$. Plainly, we have the following chain of implications:
\begin{equation*}
  \text{uniform convergence \,$\Longrightarrow$ \,compact convergence \,$\Longrightarrow$ \,pointwise convergence.}
\end{equation*}
Both reverse implications are false but, while it is easy to give examples showing that the first one is not true (take \,$\Omega=\C$ \,and \,$f_n(z)=z/n$), it is not so immediate to provide examples of the failure of the second one. Maybe the reason is that compact convergence and pointwise convergence are very close to each other in some sense. In fact, Osgood's theorem asserts that if \,$(f_n)$ \,converges pointwise in \,$\Omega$, then there is a dense open subset \,$G \subset \Omega$ \,in which \,$(f_n)$ \,converges compactly (see \cite{osgoodpaper} or \cite[p.~151]{Remmert}). Moreover, by the Weierstrass convergence theorem (see, e.g., \cite[p.~176]{Ahlfors}), if \,$f:\Omega\to\C$ \,is the pointwise limit, we have \,$f\in H(G)$. We refer the interested reader to the papers \cite{krantz1,krantz2}, where several kinds of convergence of holomorphic functions are analyzed --even in higher dimensions-- as well as the size of the ``lucky'' open set \,$G$. Osgood's theorem increases the interest in the subject of this paper, that is next described.

\vskip 3pt

In order to study how many --in an appropriate sense-- sequences \,$\mathbf{f}=(f_n)\in H(\Omega)^{\N}$ \,of holomorphic functions in \,$\Omega$ \,enjoy a given mode of convergence but not another one, we define the following three sets of sequences in \,$H(\Omega)$, where, for the sake of normalization, we have selected the zero as the limit function:
\begin{itemize}
  \item $\mathcal{S}_p = \left\{\mathbf{f}=(f_n)\in H(\Omega)^{\N} : \, \lim_{n\to\infty}f_n=0 \, \text{ pointwisely}\right\}$.
  \item $\mathcal{S}_{uc} = \left\{\mathbf{f}=(f_n)\in H(\Omega)^{\N} : \, \lim_{n\to\infty}f_n=0 \, \text{ uniformly on compacta}\right\}$.
  \item $\mathcal{S}_{u} = \left\{\mathbf{f}=(f_n) \in H(\Omega)^{\N} : \, \lim_{n\to\infty}f_n=0 \, \text{ uniformly on $\Omega$}\right\}$.
\end{itemize}
As a special case of the above chain of implications, we have that \,$\mathcal{S}_u \subset \mathcal{S}_{uc} \subset \mathcal{S}_p$.

\vskip 3pt

In this work we are interested in the algebraic and algebraic-topological size of the sets \,$\mathcal{S}_p \setminus \mathcal{S}_{uc}$ \,and \,$\mathcal{S}_{uc} \setminus \mathcal{S}_u$. In fact, it will be proved that both sets contain, except for zero, large linear algebras, as well as dense linear subspaces of an appropriate Fr\'echet space. These dense subspaces can be selected, in addition, so as to contain a prescribed sequence \,{\bf f} \,of each of those sets. Moreover, it will be shown that each of these special families of sequences contain, except for zero, an infinite dimensional Banach space whose norm topology is comparable to the natural one. This will be carried out along the sections 3, 4 and 5, while section 2 will be devoted to recall a number of concepts and results coming from the theory of lineability, as well as to fix the algebraic and topological structure of the set of sequences \,$H(\Omega)^{\N}$. Finally, in section 6 it is established the topological genericity of \,$\mathcal{S}_{uc} \setminus \mathcal{S}_u$.

\section{Algebraic and topological background}\label{background}

\quad Let us consider the vector space \,$H(\Omega)^{\N}$ \,of all sequences \,$(f_n)$ \,of holomorphic functions in \,$\Omega$, where the sum and the multiplication by complex scalars are defined in the usual way. We can also consider the following product of sequences in \,$H(\Omega)$: if \,${\bf f}=(f_n)$ \, and \,${\bf g}=(g_n)$, then
\begin{equation*}
  {\bf f} \cdot {\bf g} = (f_n \cdot g_n).
\end{equation*}
Endowed with this product, it is an elementary fact that \,$H(\Omega)^{\N}$ \,becomes a commutative linear algebra.

\vskip 3pt

Recall that \,$H(\Omega)$ \,has been endowed with the usual compact open topology. If \,$(K_j)$ \,is an exhaustive sequence of compact subsets of \,$\Omega$ \,as in \cite[Theorem 13.3]{Rudin}, then an example of metric \,$d$ \,generating the topology of \,$H(\Omega)$ \,is
\begin{equation*}
  d(f,g) = \displaystyle{\sum_{j=1}^{\infty}}
  \frac{1}{2^j} \cdot \frac{\sup_{z\in K_j} |f(z)-g(z)|}{1+\sup_{z\in K_j} |f(z)-g(z)|}
\end{equation*}
(see, e.g., \cite[pp. 220--221]{Ahlfors}). Now, we endow our space \,$H(\Omega)^{\N}$ \,with the product topology. A complete translation-invariant metric in \,$H(\Omega)^{\N}$ \,is given by
\begin{equation*}
  \widetilde{d}({\bf f},{\bf g}) =
  \sum_{n=1}^{\infty}\frac{1}{2^n} \cdot \frac{d(f_n,g_n)}{1 + d(f_n,g_n)}.
\end{equation*}
This metric induces the product topology on \,$H(\Omega)^{\N}$, making it a Fr\'echet space. Since \,$H(\Omega)$ \,is separable, it follows that \,$H(\Omega)^{\N}$ \,is also separable (see \cite[pp.~370 and 373]{Kothe}).

\vskip 3pt

Now, we turn our attention to lineability. The goal of the modern theory of lineability is to find linear structures inside nonlinear subsets of a vector space. For an extensive study of this theory, the reader is referred to the book \cite{ABPS} (see also \cite{bams,tesisseoane}). Let us recall some of its concepts. A subset \,$A$ \,of a vector space \,$X$ \,is called {\it lineable} whenever there is an infinite dimensional vector subspace of \,$X$ \,that is contained, except for zero, in \,$A$; and \,$A$ \,is said to be {\it algebrable} if it is contained in some linear algebra and there is an infinitely generated algebra contained, except for zero, in \,$A$.  More precisely, if \,$\alpha$ \,is a cardinal number, then \,$A$ \,is said to be {\it $\alpha$-lineable} if there exists a vector subspace \,$V \subset X$ \,such that \,$\dim(V)=\alpha$ \,and \,$V \setminus \{0\} \subset A$. In addition, if \,$A$ \,is contained in some commutative linear algebra, then \,$A$ \,is called {\it strongly $\alpha$-algebrable} if there exists an $\alpha$-generated free algebra \,$\mathcal{M}$ \,with \,$\mathcal{M}\setminus \{0\} \subset A$; that is, there exists a subset \,$B$ \,of cardinality \,$\alpha$ \,with the following property: for any positive integer \,$N$, any nonzero polynomial \,$P$ \,in \,$N$ \,variables without constant term and any distinct elements \,$z_1, \ldots ,z_N \in B$, we have
\begin{equation*}
  P(z_1,\ldots,z_N)\in A \setminus \{0\}.
\end{equation*}
Now, assume that \,$X$ \,is a topological vector space and \,$A \subset X$. Then \,$A$ \,is called {\it dense-lineable} in \,$X$ \,if there is a dense vector subspace \,$V \subset X$ \,such that \,$V\setminus\{0\} \subset A$. If there is a closed infinite dimensional subspace \,$V\subset X$ \,such that \,$V\setminus \{0\} \subset A$, then \,$A$ \,is called {\it spaceable}.

\vskip 3pt

The more accurate notions of pointwise lineable set and infinitely pointwise lineable set have been recently introduced in \cite{PellegrinoRaposo} and \cite{CalderonGerlachPrado}, respectively. Given a cardinal number \,$\alpha$, a subset \,$A$ \,of a vector space \,$X$ \,is {\it pointwise $\alpha$-lineable} if for every \,$x\in A$ \,there exists a vector subspace \,$W_x \subset X$ \,such that \,$\dim(W_x) = \alpha$ \,and \,$x \in W_x \subset A \cup \{0\}$. If \,$X$ \,is a topological vector space, then \,$A$ \,is said to be {\it infinitely pointwise $\alpha$-dense-lineable} in \,$X$ \,if for every \,$x \in A$ \,there is a family \,$\left\{W_k : k \in \N \right\}$ \,such that each \,$W_k$ \,is a dense subspace of \,$X$, $\dim(W_k) = \alpha$, $x \in W_k\subset A \cup \{0\}$, and
\,$W_k \cap W_n = {\rm span}\{x\}$ \,whenever \,$k\neq n$.

\vskip 3pt

We conclude this section with the next result, whose proof can be found in \cite[Theorem 2.3]{CalderonGerlachPrado} and that will be applied in Theorem \ref{Pointwise dense-lineable}:

\begin{theorem}\label{A stronger than B. Pointwise}
Let us suppose that \,$X$ \,is a metrizable separable topological vector space, $\alpha$ \,is an infinite cardinal number, and \,$A$ \,and \,$B$ \,are two
subsets of \,$X$ \,with the following properties:
\begin{enumerate}
  \item $A+B\subset A$.
  \item $A\cap B=\varnothing$.
  \item $A$ \,is pointwise $\alpha$-lineable.
  \item $B$ \,is dense-lineable.
\end{enumerate}
Then the set \,$A$ \,is infinitely pointwise $\alpha$-dense-lineable in \,$X$.
\end{theorem}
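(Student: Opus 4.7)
Fix $x \in A$. Hypotheses (3)--(4) yield a subspace $V_x \subset X$ with $\dim V_x = \alpha$, $x \in V_x \subset A \cup \{0\}$, and a dense subspace $D \subset X$ with $D \setminus \{0\} \subset B$; hypothesis (2) then forces $V_x \cap D = \{0\}$, so $V_x + D$ is algebraically direct. Exploiting separability of $X$ together with density of $D$, I would next extract countably many pairwise disjoint, linearly independent sequences $(d_{k,n})_{n \in \N} \subset D$ ($k \in \N$), each \emph{tail-dense} in $X$ (every tail $\{d_{k,n} : n \geq N\}$ is dense), e.g.\ by a diagonal enumeration of a countable dense subset of $X$ that lies inside $D$. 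Fix a Hamel basis $\{x\} \cup E$ of $V_x$, partition $E = \bigsqcup_{k \in \N} E_k$ with $|E_k| = \alpha$, single out a countable subfamily $\{c_{k,n} : n \in \N\} \subset E_k$ and put $E_k' := E_k \setminus \{c_{k,n} : n \in \N\}$. Replacing each $c_{k,n}$ by a sufficiently small scalar multiple of itself, arrange that $c_{k,n} \to 0$ in $X$. The candidate subspace is
\[
W_k \;:=\; \mathrm{span}\{x\} \;+\; \mathrm{span}(E_k') \;+\; \mathrm{span}\{c_{k,n} + d_{k,n} : n \in \N\}.
\]

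The verifications come in four bits. \emph{Dimension}: $V_x \cap D = \{0\}$ makes the above sum direct, yielding $\dim W_k = 1 + \alpha + \aleph_0 = \alpha$. \emph{Inclusion} $W_k \setminus \{0\} \subset A$: any $w \in W_k$ decomposes uniquely as $w = v + d$ with $v = \lambda x + \sum_\beta \mu_\beta e_\beta + \sum_n \nu_n c_{k,n} \in V_x$ and $d = \sum_n \nu_n d_{k,n} \in D$; linear independence of $\{x\} \cup E$ shows that $v = 0$ forces all coefficients (hence $d$) to vanish, so a nonzero $w$ has $v \in V_x \setminus \{0\} \subset A$ and therefore $w = v + d \in A$ (via $A + B \subset A$ when $d \neq 0$). \emph{Pairwise intersection}: for $k \neq \ell$, taking $w \in W_k \cap W_\ell$, matching the unique $V_x$-parts, and exploiting the disjointness of the $E_k$'s and of the $(d_{k,n})_n$'s, every coefficient except that of $x$ must vanish on both sides, giving $W_k \cap W_\ell = \mathrm{span}\{x\}$. \emph{Density}: for $y \in X$ and $\varepsilon > 0$, pick $N$ with $c_{k,n}$ within $\varepsilon/2$ of $0$ for $n \geq N$ and, by tail-density, $n \geq N$ with $d_{k,n}$ within $\varepsilon/2$ of $y$; then $c_{k,n} + d_{k,n} \in W_k$ is within $\varepsilon$ of $y$ (using translation invariance of a compatible metric on $X$).

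The main obstacle is to guarantee $W_k \subset A \cup \{0\}$ while simultaneously maintaining dimension $\alpha$, density in $X$, and the intersection condition. The direct choice $V_x + D_k$ fails since $D_k \setminus \{0\} \subset B$ lies outside $A$. The twist $d_{k,n} \mapsto c_{k,n} + d_{k,n}$, combined with the deliberate \emph{omission} of the bare basis vectors $c_{k,n}$ from $W_k$, is the device that forces every nonzero element of $W_k$ to project nontrivially onto $V_x$, so that $A + B \subset A$ can place it in $A$. Smallness of the $c_{k,n}$ preserves density, while disjoint basis chunks $E_k$ and disjoint $D$-sequences across $k$ supply the intersection equality.
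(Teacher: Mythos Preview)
The paper does not supply its own proof of this statement: it simply records that ``the proof can be found in \cite[Theorem 2.3]{CalderonGerlachPrado}'' and then invokes the result as a black box in Theorem~\ref{Pointwise dense-lineable}. So there is nothing in the paper to compare your argument against.

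That said, your sketch is essentially a correct reconstruction of the standard mechanism behind this kind of result. The four verifications (dimension, inclusion in $A\cup\{0\}$, pairwise intersection $=\mathrm{span}\{x\}$, density) all go through for the reasons you indicate; the key observation---that $V_x\cap D=\{0\}$ forces the $V_x$-component of any nonzero $w\in W_k$ to be nonzero, so that $A+B\subset A$ places $w$ in $A$---is exactly the right one. Two small remarks. First, the extra requirement that the sequences $(d_{k,n})_n$ be jointly linearly independent (and pairwise disjoint across $k$) is not actually needed: linear independence of $\{x\}\cup E_k'\cup\{c_{k,n}+d_{k,n}:n\}$ and the intersection identity $W_k\cap W_\ell=\mathrm{span}\{x\}$ both follow already from the directness of $V_x+D$ together with the disjointness of the basis blocks $E_k\subset E$; the $d$'s can be arbitrary elements of $D$ as long as each tail $\{d_{k,n}:n\ge N\}$ is dense. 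Second, your phrase ``fix a Hamel basis $\{x\}\cup E$ of $V_x$'' silently assumes $x\neq 0$; if $0\in A$ and $x=0$, take instead any Hamel basis $E$ of $V_x$ and drop the $\mathrm{span}\{x\}$ summand---the same argument then yields $W_k\cap W_\ell=\{0\}=\mathrm{span}\{x\}$, as required.
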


\section{Algebrability}

\quad In this section we shall prove the existence of large linear algebras inside our families of sequences \,$\mathcal{S}_p \setminus \mathcal{S}_{uc}$ \,and \,$\mathcal{S}_{uc} \setminus \mathcal{S}_u$. As usual, the symbol \,$\mathfrak{c}$ \,will denote the cardinality of the continuum. Moreover, \,$D(z_0,r)$ \,and \,$\overline{D}(z_0,r)$ \,will stand, respectively, for the open disc and the closed disc in the complex plane with center \,$z_0$ \,and radius \,$r$. By \,$\C_{\infty}$ \,we represent the extended complex plane (or Riemann sphere) \,$\C \cup \{\infty\}$, while \,$\partial A$ \,will denote the boundary in \,$\C$ \,of a
subset \,$A \subset \C$. The interior of a subset \,$A$ \,of a topological space \,$X$ \,will be represented by \,$A^\circ$.

\begin{theorem}\label{Sp-Suc algebrable}
The set \,$\mathcal{S}_p \setminus \mathcal{S}_{uc}$ \,is strongly $\mathfrak{c}$-algebrable.
\end{theorem}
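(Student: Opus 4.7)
The approach I would take is the standard exponential construction of Balcerzak--Bartoszewicz--Filipczak, applied inside the commutative algebra $H(\Omega)^{\N}$. The first step is to fix a base sequence $\mathbf{g}=(g_n)\in\mathcal{S}_p\setminus\mathcal{S}_{uc}$ with the extra feature that there exist a compact set $K\subset\Omega$ and points $z_n\in K$ such that $g_n(z_n)=\lambda_n\in(0,+\infty)$ and $\lambda_n\to+\infty$. Granted that $\mathcal{S}_p\setminus\mathcal{S}_{uc}\neq\varnothing$ (established separately, typically by a Runge-type construction adapted to the topology of $\Omega$), one starts from any representative; by Vitali's theorem it cannot be uniformly bounded on every compact of $\Omega$, so after passing to a subsequence we may assume $\sup_K|g_n|\to+\infty$ for some compact $K$. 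Choosing $z_n\in K$ with $|g_n(z_n)|=\sup_K|g_n|$ and then multiplying each $g_n$ by the unimodular scalar $\overline{g_n(z_n)}/|g_n(z_n)|$ preserves moduli---hence preserves pointwise convergence to $0$ and failure of compact convergence---while placing each $g_n(z_n)$ on the positive real axis.

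Next, fix a $\mathbb{Q}$-linearly independent subset $B\subset(0,+\infty)$ of cardinality $\mathfrak{c}$, for instance a subset of a Hamel basis of $\R$ over $\mathbb{Q}$. For each $\alpha\in B$ define
\[
\mathbf{f}^{(\alpha)}:=\bigl(e^{\alpha g_n}-1\bigr)_{n\geq 1}\in H(\Omega)^{\N},
\]
and let $\mathcal{M}$ denote the subalgebra of $H(\Omega)^{\N}$ generated by $\{\mathbf{f}^{(\alpha)}:\alpha\in B\}$. I claim $\mathcal{M}$ is free on this family and $\mathcal{M}\setminus\{0\}\subset\mathcal{S}_p\setminus\mathcal{S}_{uc}$, which is exactly strong $\mathfrak{c}$-algebrability. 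A generic element of $\mathcal{M}$ has the form $\mathbf{F}=(F_n)$ with
\[
F_n(z)=P\bigl(e^{\alpha_1 g_n(z)}-1,\ldots,e^{\alpha_N g_n(z)}-1\bigr),
\]
where $\alpha_1,\ldots,\alpha_N\in B$ are distinct and $P=\sum_{\mathbf{k}\neq\mathbf{0}}d_{\mathbf{k}}X^{\mathbf{k}}\in\C[X_1,\ldots,X_N]$ is nonzero without constant term. Binomial expansion of each $(e^{\alpha_j g}-1)^{k_j}$ and collection of terms produce
\[
F_n(z)=\sum_{\mathbf{i}}c_{\mathbf{i}}\,e^{(\mathbf{i}\cdot\boldsymbol{\alpha})g_n(z)},
\]
a finite sum over $\mathbf{i}\in\N^N$ in which, by the $\mathbb{Q}$-linear independence of $B$, the exponents $\mathbf{i}\cdot\boldsymbol{\alpha}=i_1\alpha_1+\cdots+i_N\alpha_N$ are pairwise distinct and strictly positive. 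If $\mathbf{k}^*$ is the unique multi-index maximizing $\mathbf{k}\cdot\boldsymbol{\alpha}$ among those with $d_{\mathbf{k}}\neq 0$, then only $\mathbf{k}=\mathbf{k}^*$ can lie componentwise above $\mathbf{k}^*$ among such $\mathbf{k}$ (any strict majorant would give $\mathbf{k}\cdot\boldsymbol{\alpha}>\mathbf{k}^*\cdot\boldsymbol{\alpha}$, contradicting maximality), so $c_{\mathbf{k}^*}=d_{\mathbf{k}^*}\neq 0$.

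For each $z\in\Omega$, $g_n(z)\to 0$ forces every $e^{\alpha_j g_n(z)}-1\to 0$, hence $F_n(z)\to P(0,\ldots,0)=0$ and $\mathbf{F}\in\mathcal{S}_p$. Failure of compact convergence and nontriviality follow by evaluation at $z_n$: since $\lambda_n\to+\infty$ with $\lambda_n>0$, the term $d_{\mathbf{k}^*}\exp\bigl((\mathbf{k}^*\cdot\boldsymbol{\alpha})\lambda_n\bigr)$ has the strictly largest real exponent and dominates all others, so $|F_n(z_n)|\to+\infty$; since $z_n\in K$ we get $\sup_K|F_n|\to+\infty$, proving $\mathbf{F}\notin\mathcal{S}_{uc}$ and $\mathbf{F}\neq 0$. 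The same identification $c_{\mathbf{k}^*}=d_{\mathbf{k}^*}$ shows that $\mathbf{F}=0$ in $H(\Omega)^{\N}$ forces $P=0$, so $\mathcal{M}$ is free. The principal obstacle in this plan is Step $1$, namely producing a base sequence with peaks trapped in a compact subset of $\Omega$: a naive Runge-type construction would push the peaks toward $\partial\Omega$ and recover compact convergence, so the delicate point is to arrange the peaks so that their accumulation occurs at an interior point of $\Omega$. Once the base is in hand, the leading-exponential analysis above is entirely routine.
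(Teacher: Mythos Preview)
Your approach is correct and follows a genuinely different route from the paper. The paper builds $\mathfrak{c}$-many generators $\mathbf{f}_h=(f_{h,n})$ directly: for each $h$ in a Hamel basis $\mathcal{H}\subset(0,1]$ and each $n$, a Runge approximation produces $f_{h,n}\in H(\Omega)$ that is within $e^{-n^2}$ of $0$ on an exhausting compact $K_n$ and on a segment $\{0\}\cup[s_n,t_n]\subset[0,R)$, while $f_{h,n}(s_{n+1})\approx e^{hn}$; the verification that $P(\mathbf{f}_{h_1},\dots,\mathbf{f}_{h_N})\in\mathcal{S}_p\setminus\mathcal{S}_{uc}$ then goes through quantitative error control and a multilinear Newton expansion of $P$. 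You instead reduce everything to \emph{nonemptiness}: your Vitali/Montel observation---that any $(g_n)\in\mathcal{S}_p\setminus\mathcal{S}_{uc}$ is automatically unbounded on some compact---is a nice structural point, and after passing to a subsequence and rotating you get a base sequence with real peaks $\lambda_n\to+\infty$ trapped in a compact; the Balcerzak--Bartoszewicz--Filipczak generators $e^{\alpha g_n}-1$ and a clean leading-exponential comparison then finish the job with no error analysis at all. Your route is more modular and avoids the paper's delicate estimates, at the price of deferring the construction of a single element of $\mathcal{S}_p\setminus\mathcal{S}_{uc}$---which, in the paper's logical order, is not available before this theorem and requires essentially the same Runge construction (with peaks accumulating at an interior point) that the paper carries out here. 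One minor slip: your expansion $F_n=\sum_{\mathbf{i}}c_{\mathbf{i}}e^{(\mathbf{i}\cdot\boldsymbol{\alpha})g_n}$ runs over $\mathbf{i}\in(\N\cup\{0\})^N$, not $\N^N$, and the constant term $c_{\mathbf{0}}$ need not vanish (e.g.\ already for $P(X)=X$ one has $e^{\alpha g}-1$); this is harmless, since your leading-term argument only uses $c_{\mathbf{k}^*}=d_{\mathbf{k}^*}\neq 0$ at the maximal exponent $\mathbf{k}^*\cdot\boldsymbol{\alpha}>0$.
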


\begin{proof}
Without loss of generality (just practice a translation), we can suppose that \,$0\in \Omega$. Then there is a disc centered at \,$0$ \,contained in \,$\Omega$ \,and thus the value
\begin{equation*}
  R := \sup\left\{x\in\R : x\geq 0 \text{ and } [0,x]\subset \Omega\right\}
\end{equation*}
is strictly positive ($R$ \,could be infinity). Since \,$\Omega$ \,is open, \,$R$ \,cannot belong to \,$\Omega$. Then the set
\begin{equation*}
  G := \Omega \setminus[0,R) = \Omega \setminus[0,R]
\end{equation*}
is open. By \cite[Theorem 13.3]{Rudin}, there is a countable family \,$\left\{K_n:n\in\N\right\}$ \,of compact subsets of \,$G$ \,with the following properties:
\begin{itemize}
  \item $G = \bigcup_{n\in\N} K_n$.
  \item Each \,$K_n$ \,is contained in \,$K_{n+1}^\circ$.
  \item Every compact subset of \,$\Omega\setminus[0,R)$ \,is contained in some \,$K_n$.
  \item For each \,$n\in\N$, every connected component of \,$\C_{\infty}\setminus K_n$ \,contains a connected component of \,$\C_{\infty}\setminus G$.
\end{itemize}
Note also that \,$K_n\cap[0,R) = \varnothing$ \,for every \,$n\in\N$.

\vskip 3pt

We now choose two sequences \,$(s_n)$ \,and \,$(t_n)$ \,of positive real numbers with the following properties:
\begin{itemize}
  \item $0<\cdots<s_3<s_2<s_1<t_1<t_2<t_3<\cdots<R$.
  \item $\lim_{n\to\infty}s_n=0$.
  \item $\lim_{n\to\infty}t_n=R$.
\end{itemize}
For each \,$n\in\N$, the set
\begin{equation*}
  L_n := K_n \cup \{0\} \cup \{s_{n+1}\} \cup [s_n,t_n]
\end{equation*}
is a compact subset of \,$\Omega$. As a consequence, there is \,$r_n>0$ \,small enough to fulfill the next conditions:
\begin{itemize}
  \item $L_n+D(0,r_n)\subset \Omega$.
  \item $\big(K_n+D(0,r_n)\big) \cap \big([0,t_n]+D(0,r_n)\big)=\varnothing$.
  \item $D(0,r_n)\cap D(s_{n+1},r_n)=\varnothing$.
  \item $D(s_{n+1},r_n)\cap D(s_n,r_n)=\varnothing$.
\end{itemize}
Then the set \,$G_n := L_n + D(0,r_n)$ \,is open and it is contained in \,$\Omega$.

\vskip 3pt

Given \,$n\in\N$ \,and \,$c>0$, let \,$g_{c,n}$ \,be the holomorphic function on \,$G_n$ \,defined as
\begin{equation*}
  g_{c,n}(z)=
  \begin{cases}
    0    & \text{if } z\in \big(K_n+D(0,r_n)\big) \cup D(0,r_n) \cup \big([s_n,t_n]+D(0,r_n)\big) \\
    e^{cn} & \text{if } z\in D(s_{n+1},r_n).
  \end{cases}
\end{equation*}
We now consider a set \,$E$ \,determined by choosing one point at each connected component of \,$\C_{\infty}\setminus\Omega$. Then \,$E$ \,contains a point at each connected component of \,$\C_{\infty}\setminus G$ \,and, as a consequence, at each connected component of \,$\C_{\infty}\setminus K_n$ \,and of \,$\C_{\infty}\setminus L_n$ \,for every \,$n\in\N$. By the Runge approximation theorem (see \cite[Theorem 13.6]{Rudin}), there is a rational function \,$f_{c,n}$ \,whose poles lie in \,$E\subset \C_{\infty} \setminus \Omega$ \,such that
\begin{equation*}
  \left|f_{c,n}(z)-g_{c,n}(z)\right| < e^{-n^2}
\end{equation*}
for all \,$z\in L_n$. In particular, \,$f_{c,n}$ \,is holomorphic on \,$\Omega$.

\vskip 3pt

Inspired by \cite[Proposition 7]{Balcerzak2013} and \cite[Proposition 2.3]{Bernal2018}, we consider an algebraic basis \,$\mathcal{H}$ \,for \,$\R$ \,over the field \,$\mathbb{Q}$ \,contained in the interval \,$(0,1]$. It is known that the cardinality of such a basis is \,$\mathfrak{c}$ \,(see \cite[Theorem 4.2.3]{Kuczma}). We will prove that the set of sequences
\begin{equation*}
  \left\{{\bf f}_h = \big(f_{h,n}\big) : h\in\mathcal{H}\right\}
\end{equation*}
generates a free algebra contained in \,$\mathcal{S}_p\setminus\mathcal{S}_{uc}$. In order to do that, we must check that if \,$h_1,\ldots,h_N$ \,are distinct elements of \,$\mathcal{H}$ \,and \,$P$ \,is a non-zero polynomial in \,$N$ \,variables without constant term, then the sequence
\begin{equation*}
  P\left({\bf f}_{h_1},\ldots,{\bf f}_{h_N}\right)
  = \big(P \left(f_{h_1,n},\ldots,f_{h_N,n} \right) \big)
\end{equation*}
is not zero and converges to zero at every point but the convergence is not uniform on compacta.

\vskip 3pt

First of all, if \,$z \in \Omega$, then either \,$z\in[0,R)$ \,or \,$z\in\Omega\setminus[0,R) = G$. If \,$z\in[0,R)$, then there is \,$n_0\in\N$ \,such that \,$z\in \{0\}\cup[s_n,t_n]$ \,for all \,$n\geq n_0$. If \,$z\in G = \bigcup_{n\in\N}K_n$, then there is \,$n_0\in\N$ \,such that \,$z\in K_{n_0}\subset K_n$ \,for all \,$n\geq n_0$. In both cases, if \,$c>0$ \,and \,$n\geq n_0$, then
\begin{equation*}
  |f_{c,n}(z)| \leq |f_{c,n}(z)-g_{c,n}(z)|+|g_{c,n}(z)| < e^{-n^2}+0 = e^{-n^2} \longrightarrow 0
  \text{ \ as } n\to\infty.
\end{equation*}
That is, \,$\lim_{n\to\infty}f_{c,n}(z)=0$ \,for every \,$z\in\Omega$ \,and every \,$c>0$. From the continuity of \,$P$ \,and the fact that \,$P(0,0,\dots,0)=0$, it follows that
\begin{equation*}
  \lim_{n\to\infty} P\left(f_{h_1,n}(z),\ldots,f_{h_N,n}(z)\right)=0
\end{equation*}
for every \,$z\in\Omega$. That is, the sequence \,$\big(P\left(f_{h_1,n},\ldots,f_{h_N,n}\right)\big)$ \,belongs to \,$\mathcal{S}_p$.

\vskip 3pt

Next we will prove that \,$\big(P\left(f_{h_1,n},\ldots,f_{h_N,n}\right)\big)$ \,does not converge to zero uniformly on the compact set \,$\{0\}\cup\left\{s_{n+1}:n\in\N\right\}$. Since \,$P$ \,does not contain constant term, it can be written as
\begin{equation*}
  P(z_1,\ldots,z_N) = \sum_{j=1}^{k}\lambda_j z_1^{\alpha_{j1}}\cdots z_N^{\alpha_{jN}},
\end{equation*}
where \,$k\in\N$, $\lambda_j \in \C\setminus\{0\}$, $\alpha_{jl} \in \N\cup\{0\}$ \,for all \,$1\leq j\leq k$ \,and \,$1\leq l\leq N$, $\alpha_{j1} + \cdots + \alpha_{jN} > 0$ \,for all \,$j$, and the vectors of exponents \,$(\alpha_{11},\ldots,\alpha_{1N}),\ldots,(\alpha_{k1},\ldots,\alpha_{kN})$ \,are all different. Let
\begin{equation*}
  c_j := \alpha_{j1}h_1+\cdots+\alpha_{jN}h_N
\end{equation*}
for each \,$j\in\{1,\ldots,k\}$. Since \,$\mathcal{H}$ \,is an algebraic basis over \,$\mathbb{Q}$, it follows that \,$c_1,\ldots,c_k$ \,are distinct positive numbers, so we can assume that \,$0<c_1<\cdots<c_k$. Therefore,
\begin{align}\label{Sp-Suc algebrable.1}
  \lim_{n\to\infty} &\left|P\left(g_{h_1,n}(s_{n+1}),\ldots,g_{h_N,n}(s_{n+1})\right)\right|
  = \lim_{n\to\infty} \left|P\left(e^{h_1 n},\ldots,e^{h_N n}\right)\right| \nonumber \\
  &\qquad =\lim_{n\to\infty} \left|\sum_{j=1}^{k}\lambda_j \left(e^{h_1 n}\right)^{\alpha_{j1}} \cdots \left(e^{h_N n}\right)^{\alpha_{jN}} \right|
  = \lim_{n\to\infty} \left|\sum_{j=1}^{k}\lambda_j e^{c_jn}\right| \nonumber \\
  &\qquad =\lim_{n\to\infty} e^{c_kn}\left|\lambda_k + \sum_{j=1}^{k-1}\lambda_j e^{(c_j-c_k)n}\right|
  = +\infty.
\end{align}

\vskip 2pt

Let us endow the space \,$\C^N$ \,with the supremum norm. Fixed \,$n\in\N$, we define the vectors \,$x$ \,and \,$y$ \,as follows:
\begin{equation*}
  x = \left(f_{h_1,n}(s_{n+1}),\ldots,f_{h_N,n}(s_{n+1})\right),
\end{equation*}
\begin{equation*}
  y = \left(g_{h_1,n}(s_{n+1}),\ldots,g_{h_N,n}(s_{n+1})\right).
\end{equation*}
Since \,$h_1,\ldots,h_N\in(0,1]$, we have
\begin{equation*}
  \Vert y\Vert = \max_{1\leq j\leq N} \left|g_{h_j,n}(s_{n+1})\right|
  = \max_{1\leq j\leq N} e^{h_j n} \leq e^n.
\end{equation*}
Moreover,
\begin{equation*}
  \Vert x-y \Vert = \max_{1\leq j\leq N} \left|f_{h_j,n}(s_{n+1})-g_{h_j,n}(s_{n+1})\right|
  < e^{-n^2}.
\end{equation*}
If \,$M$ \,is the degree of the polynomial \,$P$, we write \,$P=\sum_{m=1}^{M}P_m$, where each \,$P_m$ \,(with \,$1\leq m\leq M$) is an $m$-homogeneous polynomial (recall that it was assumed that \,$P$ \,did not have constant term). For each \,$m\in\{1,\ldots,M\}$, let
\begin{equation*}
  A_m: \underbrace{\C^N\times\cdots\times\C^N}_{m \text{ times}} \to \C
\end{equation*}
be the symmetric $m$-linear mapping associated to \,$P_m$ (see \cite[Corollary 2.3]{Mujica}). The continuity of \,$P_m$ \,and \,$A_m$ \,implies that \,$\Vert A_m\Vert<\infty$ (see \cite[Proposition 1.2]{Mujica}). Moreover, the Newton binomial formula implies that
\begin{equation*}
  P_m(x) = A_m(\underbrace{x,\ldots,x}_{m\text{ times}})
  = \sum_{j=0}^{m} \binom{m}{j}
  A_m(\underbrace{y,\ldots,y}_{j\text{ times}},\underbrace{x-y,\ldots,x-y}_{m-j \text{ times}})
\end{equation*}
(see \cite[Corollary 1.9]{Mujica}). Hence,
\begin{align*}
  |P_m(x)-P_m(y)|
  &\leq \sum_{j=0}^{m-1} \binom{m}{j}
  |A_m(\underbrace{y,\ldots,y}_{j\text{ times}},\underbrace{x-y,\ldots,x-y}_{m-j \text{ times}})| \\
  &\leq \sum_{j=0}^{m-1} \binom{m}{j} \Vert A_m\Vert \cdot \Vert y\Vert^j
  \cdot \Vert x-y\Vert^{m-j} \\
  &\leq \sum_{j=0}^{m-1} \binom{m}{j} \Vert A_m\Vert \cdot e^{nj} \cdot e^{-n^2(m-j)}
  \leq m \cdot m! \Vert A_m\Vert \cdot e^{n(m-1)-n^2}.
\end{align*}
As this holds for all \,$n\in\N$ \,and \,$m\in\{1,\ldots,M\}$, we have that
\begin{align}\label{Sp-Suc algebrable.2}
  &\left|P\left(f_{h_1,n}(s_{n+1}),\ldots,f_{h_N,n}(s_{n+1})\right) - P\left(g_{h_1,n}(s_{n+1}),\ldots,g_{h_N,n}(s_{n+1})\right) \right|
  = \left|P(x)-P(y)\right| \nonumber \\
  &\hspace{1cm} = \left|\sum_{m=1}^{M}(P_m(x)-P_m(y))\right|
  \leq \sum_{m=1}^{M} m \cdot m! \Vert A_m\Vert \cdot e^{n(m-1)-n^2}
  \longrightarrow 0 \quad \text{as } n\to\infty.
\end{align}
The limits in \eqref{Sp-Suc algebrable.1} and \eqref{Sp-Suc algebrable.2} imply that
\begin{equation*}
  \lim_{n\to\infty}\left|P\left(f_{h_1,n}(s_{n+1}),\ldots,f_{h_N,n}(s_{n+1})\right)\right|
  = +\infty.
\end{equation*}
This proves that the sequence \,$\big(P\left(f_{h_1,n},\ldots,f_{h_N,n}\right)\big)$ \,does not converge uniformly to zero on the compact set \,$\{0\}\cup\left\{s_{n+1}:n\in\N\right\}$. In particular, \,$\big(P\left(f_{h_1,n},\ldots,f_{h_N,n}\right)\big)$ \,is not zero and belongs to \,$\mathcal{S}_p\setminus \mathcal{S}_{uc}$. That concludes the proof.
\end{proof}

\vskip 3pt

\begin{theorem}\label{Suc-Su algebrable}
The set \,$\mathcal{S}_{uc}\setminus \mathcal{S}_u$ \,is strongly $\mathfrak{c}$-algebrable.
\end{theorem}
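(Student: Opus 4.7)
My proof plan parallels Theorem \ref{Sp-Suc algebrable} in spirit: build a family of sequences $\mathbf{f}_h = (f_{h,n})$, indexed by a Hamel basis of $\R$ over $\mathbb{Q}$, so that a Hamel-independence plus exponential-blow-up argument yields strong $\mathfrak{c}$-algebrability. The essential change is that the ``bad'' points where the polynomial combinations diverge must now escape every compact subset of $\Omega$ (in order to preserve compact convergence) while still lying in $\Omega$ (so as to break uniform convergence on $\Omega$).

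First I would invoke the exhaustive sequence $(K_n)$ of compacta from \cite[Theorem 13.3]{Rudin}, whose Runge-type property is that each connected component of $\C_\infty \setminus K_n$ contains a component of $\C_\infty \setminus \Omega$. Since $\Omega$ is open and nonempty, $\Omega \setminus K_n \neq \varnothing$, so I can pick $z_n \in \Omega \setminus K_n$; the sequence $(z_n)$ then escapes compacta, because any compact $K \subset \Omega$ lies in $K_n$ for large $n$. Choose $r_n > 0$ small enough that $\overline{D}(z_n, r_n) \subset \Omega$ and $\overline{D}(z_n, r_n) \cap K_n = \varnothing$, and set $L_n := K_n \cup \overline{D}(z_n, r_n)$. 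For each $c > 0$, define $g_{c,n}$ on a neighborhood of $L_n$ to vanish near $K_n$ and equal $e^{cn}$ near $\overline{D}(z_n, r_n)$. After choosing $E$ with one point in each connected component of $\C_\infty \setminus \Omega$, I would apply the Runge approximation theorem to produce a rational $f_{c,n} \in H(\Omega)$ with poles in $E$ satisfying $|f_{c,n} - g_{c,n}| < e^{-n^2}$ on $L_n$.

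Taking a Hamel basis $\mathcal{H} \subset (0,1]$ of $\R$ over $\mathbb{Q}$ (of cardinality $\mathfrak{c}$), I would claim that $\{\mathbf{f}_h = (f_{h,n}) : h \in \mathcal{H}\}$ generates a free algebra contained in $\mathcal{S}_{uc} \setminus \mathcal{S}_u$. Compact convergence of any polynomial combination is immediate: for compact $K \subset \Omega$, we have $K \subset K_n$ eventually, and on $K_n$ each $|f_{h,n}| < e^{-n^2}$, so $|P(f_{h_1,n}, \ldots, f_{h_N,n})| \leq C_P e^{-n^2}$ on $K$ for any polynomial $P$ without constant term. To show failure of uniform convergence on $\Omega$, evaluate at $z_n$, where $g_{h_i,n}(z_n) = e^{h_i n}$, and replicate verbatim the Hamel-basis computation \eqref{Sp-Suc algebrable.1} together with the multilinear estimate \eqref{Sp-Suc algebrable.2} from Theorem \ref{Sp-Suc algebrable} to conclude
\[
  \left|P\bigl(f_{h_1,n}(z_n), \ldots, f_{h_N,n}(z_n)\bigr)\right| \longrightarrow +\infty,
\]
so the sequence is non-zero and $\sup_{\Omega}|P(f_{h_1,n}, \ldots, f_{h_N,n})|$ cannot tend to $0$.

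The main obstacle I anticipate is a topological subtlety in applying Runge on $L_n$: one must check that each connected component of $\C_\infty \setminus L_n$ meets $E$. This will be resolved as follows. The disc $\overline{D}(z_n, r_n)$ lies inside a single component $U_j$ of $\C_\infty \setminus K_n$, and removing a closed disc from an open connected subset of $\C_\infty$ leaves it connected, since any two points can be joined by a path routed around the disc. Hence the components of $\C_\infty \setminus L_n$ are in bijection with those of $\C_\infty \setminus K_n$: the component $U_j$ is replaced by $U_j \setminus \overline{D}(z_n, r_n)$, which still contains the point of $E$ originally in $U_j$ (because $E \subset \C_\infty \setminus \Omega$ is disjoint from $\overline{D}(z_n, r_n) \subset \Omega$), while the other components are unchanged. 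Thus Runge applies on $L_n$ with poles confined to $E$, and the construction goes through.
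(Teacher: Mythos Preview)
Your proposal is correct and follows essentially the same route as the paper's proof: Runge approximation of a function that vanishes on $K_n$ and equals $e^{hn}$ at an escaping point of $\Omega\setminus K_n$, followed verbatim by the Hamel-basis exponential computation and the multilinear estimate from Theorem~\ref{Sp-Suc algebrable}. The only cosmetic difference is that the paper chooses the escaping points $t_n$ along a fixed segment $[0,R)\subset\Omega$ tending to $R\in\partial\Omega$ and applies Runge on $K_n\cup\{t_n\}$ (a single extra point rather than your closed disc), which makes the component check for Runge immediate.
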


\begin{proof}
As in the proof of Theorem \ref{Sp-Suc algebrable}, we can assume without loss of generality that \,$0 \in \Omega$ \,and then we define the extended number \,$R$ \,as in that proof. By \cite[Theorem 13.3]{Rudin}, there is a countable family \,$\left\{K_n:n\in\N\right\}$ \,of compact subsets of \,$\Omega$ \,such that \,$\Omega=\bigcup_{n\in\N}K_n$, each \,$K_n$ \,is contained in \,$K_{n+1}^\circ$, every compact subset of \,$\Omega$ \,is contained in some \,$K_n$, and every connected component of \,$\C_{\infty}\setminus K_n$ \,contains a connected component of \,$\C_{\infty}\setminus\Omega$ \,for each \,$n\in\N$.

Since \,$R$ \,does not belong to \,$\Omega$, we get that \,$R\notin K_n$ \,for any \,$n\in\N$. Therefore, there exists a sequence \,$(t_n)$ \,of positive real numbers with the following properties:
\begin{itemize}
  \item $0<t_1<t_2<t_3<\cdots<R$.
  \item $\lim_{n\to\infty}t_n=R$.
  \item $t_n\notin K_n$.
\end{itemize}
By the definition of \,$R$, we have that every \,$t_n$ \,belongs to \,$\Omega$. For each \,$n \in \N$, there is \,$r_n >0$ \,small enough to fulfill the next conditions:
\begin{itemize}
  \item $K_n+D(0,r_n)\subset \Omega$.
  \item $D(t_n,r_n)\subset \Omega$.
  \item $\big(K_n+D(0,r_n)\big) \cap D(t_n,r_n)=\varnothing$.
\end{itemize}
Then the set \,$G_n := \big(K_n+D(0,r_n)\big) \cup D(t_n,r_n)$ \,is open and it is contained in \,$\Omega$.

\vskip 3pt

Given \,$n\in\N$ \,and \,$c>0$, we define the function \,$g_{c,n} : G_n \to \C$ \,as
\begin{equation*}
  g_{c,n}(z) := \begin{cases}
                0          & \text{if } z\in K_n+D(0,r_n) \\
                e^{cn} & \text{if } z\in D(t_n,r_n).
                \end{cases}
\end{equation*}
We now consider a set \,$E$ \,defined by choosing one point at each connected component of \,$\C_{\infty} \setminus \Omega$. Then \,$E$ \,contains a point at each connected component of \,$\C_{\infty}\setminus K_n$ \,and, as a consequence, at each connected component of \,$\C_{\infty}\setminus \left(K_n\cup\{t_n\}\right)$ \,for every \,$n\in\N$. By the Runge approximation theorem, there is a rational function \,$f_{c,n}$ \,whose poles lie in \,$E \subset \C_{\infty}\setminus\Omega$ \,such that
\begin{equation*}
  \left|f_{c,n}(z) - g_{c,n}(z)\right| < e^{-n^2}
\end{equation*}
for all \,$z \in K_n\cup\{t_n\}$. In particular, \,$f_{c,n}$ \,is holomorphic in \,$\Omega$.

\vskip 3pt

Let \,$\mathcal{H}$ \,be an algebraic basis for \,$\R$ \,over the field \,$\mathbb{Q}$ \,contained in the interval \,$(0,1]$. Our goal is to show that the set of sequences
\begin{equation*}
  \left\{{\bf f}_h = \big(f_{h,n}\big) : h\in\mathcal{H}\right\},
\end{equation*}
whose cardinality is plainly $\mathfrak{c}$, generates a free algebra contained in \,$\mathcal{S}_{uc}\setminus\mathcal{S}_u$. Let us suppose that \,$h_1,\ldots,h_N$ \,are distinct elements of \,$\mathcal{H}$ \,and that \,$P$ \,is a non-zero polynomial in \,$N$ \,variables without constant term. The polynomial \,$P$ \,can be written as
\begin{equation*}
  P(z_1,\ldots,z_N) = \sum_{j=1}^{k}\lambda_j z_1^{\alpha_{j1}}\cdots z_N^{\alpha_{jN}},
\end{equation*}
where \,$k\in\N$, $\lambda_j\in\C\setminus\{0\}$, $\alpha_{jl}\in\N\cup\{0\}$ \,for all \,$1\leq j\leq k$ \,and \,$1\leq l\leq N$, $\alpha_{j1}+\cdots+\alpha_{jN}>0$ \,for all \,$j$, and the vectors of exponents \,$(\alpha_{11},\ldots,\alpha_{1N}),\ldots,(\alpha_{k1},\ldots,\alpha_{kN})$ \,are different. If \,$K$ \,is any compact subset of \,$\Omega$, there exists \,$n_0\in\N$ \,such that \,$K\subset K_n$ \,for all \,$n\geq n_0$. Hence, if \,$n\geq n_0$, we have
\begin{align*}
  \sup_{z\in K}\left|P\left(f_{h_1,n}(z),\ldots,f_{h_N,n}(z)\right)\right|
  &\leq \sup_{z\in K_n}
  \sum_{j=1}^{k}|\lambda_j| \cdot |f_{h_1,n}(z)|^{\alpha_{j1}}\cdots |f_{h_N,n}(z)|^{\alpha_{jN}} \\
  &\leq \sup_{z\in K_n} \sum_{j=1}^{k}|\lambda_j| \cdot \big(|f_{h_1,n}(z)-g_{h_1,n}(z)|+|g_{h_1,n}(z)|\big)^{\alpha_{j1}} \cdots \\
  &\qquad\qquad \cdots \big(|f_{h_N,n}(z)-g_{h_N,n}(z)|+|g_{h_N,n}(z)|\big)^{\alpha_{jN}} \\
  &< \sum_{j=1}^{k}|\lambda_j| \cdot e^{-n^2\alpha_{j1}}\cdots e^{-n^2\alpha_{jN}}
  \longrightarrow 0 \quad \text{as } n\to\infty.
\end{align*}
This proves that the sequence \,$P\left({\bf f}_{h_1},\ldots,{\bf f}_{h_N}\right) =\big(P\left(f_{h_1,n},\ldots,f_{h_N,n}\right)\big)$ \,converges to zero uniformly on every compact subset of \,$\Omega$.

\vskip 3pt

Now, let
\begin{equation*}
  c_j := \alpha_{j1}h_1+\cdots+\alpha_{jN}h_N
\end{equation*}
for each \,$j\in\{1,\ldots,k\}$. Since \,$\mathcal{H}$ \,is an algebraic basis over \,$\mathbb{Q}$, it follows that \,$c_1,\ldots,c_k$ \,are distinct positive numbers and we can assume that \,$0 < c_1 < \cdots < c_k$. Therefore,
\begin{align}\label{Suc-Su algebrable.1}
  \lim_{n\to\infty} & \left|P\left(g_{h_1,n}(t_n),\ldots,g_{h_N,n}(t_n)\right)\right|
  = \lim_{n\to\infty} \left|P\left(e^{h_1n},\ldots,e^{h_Nn}\right)\right| \nonumber \\
  &= \lim_{n\to\infty} \left|\sum_{j=1}^{k}\lambda_j \left(e^{h_1n}\right)^{\alpha_{j1}} \cdots \left(e^{h_Nn}\right)^{\alpha_{jN}}\right|
  = \lim_{n\to\infty} \left|\sum_{j=1}^{k}\lambda_j e^{c_jn}\right| \nonumber \\
  &= \lim_{n\to\infty} e^{c_kn}\left|\lambda_k + \sum_{j=1}^{k-1}\lambda_j e^{(c_j-c_k)n}\right|
  = +\infty.
\end{align}

\vskip 3pt

The space \,$\C^N$ \,will be again endowed with the supremum norm. Fixed \,$n\in\N$, we define the vectors \,$x$ \,and \,$y$ \,as follows:
\begin{equation*}
  x = \left(f_{h_1,n}(t_n),\ldots,f_{h_N,n}(t_n)\right),
\end{equation*}
\begin{equation*}
  y = \left(g_{h_1,n}(t_n),\ldots,g_{h_N,n}(t_n)\right).
\end{equation*}
Then
\begin{equation*}
  \Vert y\Vert = \max_{1\leq j\leq N} e^{h_j n} \leq e^n
\end{equation*}
and
\begin{equation*}
  \Vert x-y \Vert = \max_{1\leq j\leq N} \left|f_{h_j,n}(t_n)-g_{h_j,n}(t_n)\right| < e^{-n^2}.
\end{equation*}
Set \,$P=\sum_{m=1}^{M}P_m$, where \,$M$ \,is the degree of \,$P$ \,and each \,$P_m$ \,(with \,$1\leq m\leq M$) is an $m$-homogeneous polynomial. For each \,$m\in\{1,\ldots,M\}$, let \,$A_m$ \,denote the symmetric $m$-linear mapping associated to \,$P_m$. As in the proof of Theorem \ref{Sp-Suc algebrable}, we have
\begin{align}\label{Suc-Su algebrable.2}
  &\left|P\left(f_{h_1,n}(t_n),\ldots,f_{h_N,n}(t_n)\right) - P\left(g_{h_1,n}(t_n),\ldots,g_{h_N,n}(t_n)\right) \right|
  = \left|P(x)-P(y)\right| \nonumber \\
  &\hspace{1cm} = \left|\sum_{m=1}^{M}(P_m(x)-P_m(y))\right|
  \leq \sum_{m=1}^{M}m \cdot m! \Vert A_m\Vert \cdot e^{n(m-1)-n^2}
  \longrightarrow 0 \quad \text{as } n\to\infty.
\end{align}
The limits in \eqref{Suc-Su algebrable.1} and \eqref{Suc-Su algebrable.2} imply that
\begin{equation*}
  \lim_{n\to\infty}\left|P\left(f_{h_1,n}(t_n),\ldots,f_{h_N,n}(t_n)\right)\right|
  = +\infty.
\end{equation*}
This proves that the sequence \,$\big(P\left(f_{h_1,n},\ldots,f_{h_N,n}\right)\big)$ \,does not converge to zero uniformly on \,$\Omega$. In particular, the sequence \,$\big(P\left(f_{h_1,n},\ldots,f_{h_N,n}\right)\big)$ \,cannot be zero and belongs to \,$\mathcal{S}_{uc}\setminus \mathcal{S}_u$. That concludes the proof.
\end{proof}

\section{Pointwise lineability}

\quad In this section, we will show that for each sequence \,${\bf f}$ \,in \,$\mathcal{S}_p\setminus\mathcal{S}_{uc}$ \,and in \,$\mathcal{S}_{uc}\setminus\mathcal{S}_{u}$ \,it is possible to obtain large spaces containing \,${\bf f}$ \,and contained in \,$\mathcal{S}_p\setminus\mathcal{S}_{uc}$ \,and \,$\mathcal{S}_{uc}\setminus\mathcal{S}_{u}$, respectively.

\begin{theorem}\label{Sp-Suc lineabilidad puntual}
The set \,$\mathcal{S}_p\setminus\mathcal{S}_{uc}$ \,is pointwise $\mathfrak{c}$-lineable.
\end{theorem}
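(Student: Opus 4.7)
The strategy is to realize $W_{\mathbf{f}}$ as the image, under an injective ``modulation'' map, of a carefully chosen $\mathfrak{c}$-dimensional space $\mathcal{A}$ of bounded scalar sequences $\phi=(\phi_n)\in\ell^{\infty}(\N)$ that contains the constant sequence $\mathbf{1}=(1,1,\ldots)$. Since $\mathbf{f}=(f_n)\in\mathcal{S}_p\setminus\mathcal{S}_{uc}$, there exist a compact set $K\subset\Omega$, a constant $\varepsilon>0$, an increasing sequence $(n_k)\subset\N$ and points $z_k\in K$ with $|f_{n_k}(z_k)|\geq\varepsilon$ for every $k$. Any bounded modulation $\phi$ preserves pointwise convergence to zero of $(\phi_n f_n)$, so the task reduces to selecting a $\mathfrak{c}$-dimensional family of such $\phi$'s for which $(\phi_n f_n)$ additionally fails uniform convergence on $K$ whenever $\phi\neq\mathbf{0}$.

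Consider the linear map
\begin{equation*}
L:\ell^{\infty}(\N)\longrightarrow \ell^{\infty}(\N)/c_0(\N),\qquad L(\phi):=[(\phi_{n_k})_{k\in\N}].
\end{equation*}
Any $\psi\in\ell^{\infty}(\N)$ lifts via $\phi_{n_k}:=\psi_k$ and $\phi_n:=0$ otherwise, so $L$ is surjective, and $\ker L=\{\phi\in\ell^{\infty}(\N):\phi_{n_k}\to 0\}$. Since $\ell^{\infty}(\N)/c_0(\N)$ is an infinite-dimensional Banach space, Baire's category theorem yields algebraic dimension at least $\mathfrak{c}$, while $|\ell^{\infty}(\N)|=\mathfrak{c}$ gives the reverse inequality. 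As $L(\mathbf{1})\neq 0$, a Zorn-type argument produces an algebraic complement $\mathcal{A}$ of $\ker L$ in $\ell^{\infty}(\N)$ with $\mathbf{1}\in\mathcal{A}$, so $\dim\mathcal{A}=\mathfrak{c}$ and $L|_{\mathcal{A}}$ is injective.

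Define $T:\mathcal{A}\to H(\Omega)^{\N}$ by $T(\phi):=(\phi_n f_n)$, which is linear with $T(\mathbf{1})=\mathbf{f}$. For each $\phi\in\mathcal{A}$ and $z\in\Omega$, $|\phi_n f_n(z)|\leq\|\phi\|_{\infty}|f_n(z)|\to 0$, hence $T(\phi)\in\mathcal{S}_p$. Injectivity of $T|_{\mathcal{A}}$ follows because $T(\phi)=\mathbf{0}$ combined with $f_{n_k}(z_k)\neq 0$ forces $\phi_{n_k}=0$ for every $k$, hence $\phi\in\ker L\cap\mathcal{A}=\{\mathbf{0}\}$. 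Finally, for $\phi\in\mathcal{A}\setminus\{\mathbf{0}\}$ one has $\phi\notin\ker L$, so $\phi_{n_k}\not\to 0$ and there exist $\delta>0$ and a subsequence $(k_j)$ with $|\phi_{n_{k_j}}|\geq\delta$; then
\begin{equation*}
\sup_{z\in K}|\phi_{n_{k_j}}f_{n_{k_j}}(z)|\geq |\phi_{n_{k_j}}f_{n_{k_j}}(z_{k_j})|\geq \delta\varepsilon \qquad\text{for every } j\in\N,
\end{equation*}
whence $T(\phi)\notin\mathcal{S}_{uc}$. Taking $W_{\mathbf{f}}:=T(\mathcal{A})$ yields a $\mathfrak{c}$-dimensional subspace of $H(\Omega)^{\N}$ with $\mathbf{f}\in W_{\mathbf{f}}\subset(\mathcal{S}_p\setminus\mathcal{S}_{uc})\cup\{\mathbf{0}\}$.

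The main obstacle is the construction of $\mathcal{A}$: one needs a $\mathfrak{c}$-dimensional subspace of $\ell^{\infty}(\N)$ whose restrictions to the ``bad'' indices $(n_k)$ remain linearly independent modulo $c_0$, and which contains $\mathbf{1}$ so that $\mathbf{f}$ itself lies in the image. This boils down to the identity $\dim(\ell^{\infty}(\N)/c_0(\N))=\mathfrak{c}$ together with the Axiom of Choice; in contrast with Theorem \ref{Sp-Suc algebrable}, no Runge-type analytic approximation is required, since the prescribed sequence $\mathbf{f}$ already supplies the compact set $K$ and the test points $(z_k)$ at which uniform convergence fails.
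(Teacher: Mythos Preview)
Your proof is correct and takes a genuinely different route from the paper's.

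The paper fixes $\mathbf f=(f_n)\in\mathcal S_p\setminus\mathcal S_{uc}$ and multiplies it by the \emph{entire functions} $\varphi_c(z)=e^{cz}$ for $c\ge 0$, setting $W=\operatorname{span}\{(\varphi_c\cdot f_n):c\ge 0\}$. To show a nontrivial combination $g(z)=\sum_j\lambda_j e^{c_jz}$ still breaks compact convergence, it extracts a convergent subsequence $z_{k_\ell}\to z_0\in K$, uses the identity principle to find a circle $|z-z_0|=r$ on which $g$ has no zeros, and then applies the maximum modulus principle to bound $\sup_{|z-z_0|=r}|g\cdot f_{n_{k_\ell}}|$ from below by $\varepsilon\cdot\min_{|z-z_0|=r}|g|$. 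Thus the paper's argument is explicitly constructive and rests on complex-analytic tools.

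Your argument instead modulates by \emph{scalar} sequences $\phi\in\ell^\infty(\N)$: you lift a Hamel complement of $\ker L$ (where $L(\phi)=[(\phi_{n_k})]\in\ell^\infty/c_0$) containing $\mathbf 1$, and push it forward by $T(\phi)=(\phi_n f_n)$. The verification that $T(\phi)\in\mathcal S_p\setminus\mathcal S_{uc}$ for $\phi\ne 0$ is then purely arithmetic, using only $|f_{n_k}(z_k)|\ge\varepsilon$ and $\phi_{n_k}\not\to 0$. This never invokes holomorphy of the $f_n$: your proof would establish pointwise $\mathfrak c$-lineability of the analogous set for, say, $C(\Omega)^\N$ verbatim. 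The price is nonconstructivity (Zorn is used to produce the complement $\mathcal A$), whereas the paper's subspace is written down explicitly.

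One small point: the sentence ``Baire's category theorem yields algebraic dimension at least $\mathfrak c$'' is slightly imprecise---Baire alone gives uncountable Hamel dimension; the sharpening to $\mathfrak c$ is a separate (standard) fact, obtainable for instance via Mazur's basic-sequence theorem, or here more directly by exhibiting an almost disjoint family $\{A_r:r\in\R\}$ of infinite subsets of $\N$ and noting that $\{[\chi_{A_r}]:r\in\R\}$ is linearly independent in $\ell^\infty/c_0$. This does not affect the validity of your proof.
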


\begin{proof}
Let us fix a sequence \,${\bf f}=(f_n)\in\mathcal{S}_p \setminus\mathcal{S}_{uc}$. For each \,$c\in\R$ \,we define \,$\varphi_c(z)=e^{cz}$.
Let \,$W$ \,be the following vector subspace of \,$H(\C )^{\N}$:
\begin{equation*}
  W = {\rm span} \left\{(\varphi_c\cdot f_n) : \, c \in [0,+\infty ) \right\}.
\end{equation*}
Choosing \,$c=0$, we obtain that \,${\bf f} \in W$. Our task is to show that \,$\dim (W) = \mathfrak{c}$ \,and that \,$W \setminus\{0\}$ \,is contained in \,$\mathcal{S}_p\setminus\mathcal{S}_{uc}$.

\vskip 3pt

With this aim, let \,$m\in\N$, \,$\lambda_1,\ldots,\lambda_m \in \C \setminus \{0\}$, and \,$c_1,\ldots,c_m$ \,distinct non-negative real numbers.
We can suppose that \,$0 \leq c_1 < \cdots < c_m$. Since \,$\lim_{n\to\infty} f_n(z) = 0$ \,for every \,$z \in \Omega$, it is plain that the sequence \,$\left(\sum_{j=1}^{m}\lambda_j \varphi_{c_j}\cdot f_n\right)$ \,belongs to \,$\mathcal{S}_p$. Moreover, since \,$(f_n)$ \,does not converge to zero uniformly on compacta, there exist a compact set \,$K \subset \Omega$, a positive number \,$\varepsilon$, and a subsequence \,$(f_{n_k})$ \,of \,${\bf f}$ \,such that
\begin{equation*}
  \sup_{z\in K}|f_{n_k}(z)| > \varepsilon
\end{equation*}
for all \,$k\in\N$. Now, for each \,$k\in\N$ \,there is \,$z_k \in K$ \,such that \,$|f_{n_k}(z_k)|>\varepsilon$. Since \,$K$ \,is compact, there is a subsequence \,$(z_{k_\ell})$ \,converging to a point \,$z_0\in K$. Fix an \,$R>0$ \,such that \,$K \subset D(0,R)$.

\vskip 3pt

The function \,$g(z)=\sum_{j=1}^{m}\lambda_j e^{c_jz}$ \,is not identically zero on \,$\C$. Indeed, if \,$c_m>0$, then
\begin{equation*}
  \lim_{\underset{x\in\R}{x\to+\infty}}\left|\sum_{j=1}^{m}\lambda_j e^{c_jx}\right|
  = \lim_{\underset{x\in\R}{x\to+\infty}}
  e^{c_m x}\left|\lambda_m + \sum_{j=1}^{m-1}\frac{\lambda_j}{e^{(c_m-c_j)x}}\right|
  = +\infty,
\end{equation*}
while if \,$c_m=0$, then \,$m=1$ \,and \,$g(z)=\lambda_1\neq 0$ \,for all \,$z\in\C$. In both cases, by the identity principle, \,$g$ \,has at most a finite amount of zeros on \,$D(0,R)$. Then there exists \,$r>0$ \,such that \,$\overline{D}(z_0,r)\subset\Omega$ \,and \,$g(z)\neq 0$ \,if \,$|z-z_0|=r$. Then \,$\min_{|z - z_0|=r}|g(z)|>0$. Moreover, since \,$\lim_{\ell\to\infty}z_{k_\ell}=z_0$, there exists \,$\ell_0\in\N$ \,such that \,$z_{k_\ell}\in D(z_0,r)$ \,for all \,$\ell\geq \ell_0$. By the maximum modulus theorem, for each \,$\ell\geq \ell_0$ \,we have that
\begin{align*}
  \sup_{|z-z_0|=r}\left|\sum_{j=1}^{m}\lambda_j e^{c_jz}\cdot f_{n_{k_\ell}}(z)\right|
  &\geq \sup_{|z-z_0|=r}|f_{n_{k_\ell}}(z)| \cdot \min_{|z-z_0|=r}|g(z)| \\
  &\geq \sup_{\ell \geq \ell_0}|f_{n_{k_\ell}}(z_{k_\ell})| \cdot \min_{|z-z_0|=r}|g(z)| \\
  &> \varepsilon \cdot \min_{|z-z_0|=r}|g(z)| > 0.
\end{align*}
Therefore, the sequence \,$\left(\sum_{j=1}^{m}\lambda_j \varphi_{c_j}\cdot f_n\right)$ \,does not converge to zero uniformly on \,$\left\{z\in\C:|z-z_0| = r\right\}$, which entails \,$\left(\sum_{j=1}^{m} \lambda_j \varphi_{c_j} \cdot f_n\right)\notin\mathcal{S}_{uc}$. In particular, that sequence cannot be zero, which in turn shows that the set \,$\left\{(\varphi_c\cdot f_n):c\geq 0\right\}$ \,is linearly independent. Thus, \,$W$ \,is a vector space such that \,$\dim(W) = \mathfrak{c}$ \,and
\begin{equation*}
  {\bf f} \in W \setminus\{0\} \subset \mathcal{S}_p \setminus\mathcal{S}_{uc}.
\end{equation*}
This proves the desired pointwise $\mathfrak{c}$-lineability of \,$\mathcal{S}_p\setminus\mathcal{S}_{uc}$.
\end{proof}

\vskip 3pt

\begin{theorem}\label{Suc-Su lineabilidad puntual}
The set \,$\mathcal{S}_{uc}\setminus\mathcal{S}_u$ \,is pointwise $\mathfrak{c}$-lineable.
\end{theorem}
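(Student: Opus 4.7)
The plan is to adapt the proof of Theorem \ref{Sp-Suc lineabilidad puntual}, replacing the entire functions $\varphi_c(z)=e^{cz}$ by twisted exponentials $\varphi_c(z)=e^{c\tilde h(z)}$, where $\tilde h \in H(\Omega)$ is tailored to the given sequence $\mathbf{f}$ so that $\mathrm{Re}(\tilde h(z_k))\to +\infty$ along a suitable subsequence of the ``bad'' points produced by the failure of uniform convergence on $\Omega$.

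Fix $\mathbf{f}=(f_n)\in \mathcal{S}_{uc}\setminus \mathcal{S}_u$. The failure of uniform convergence on $\Omega$ provides $\varepsilon>0$, a subsequence $(f_{n_k})$, and points $z_k\in\Omega$ with $|f_{n_k}(z_k)|>\varepsilon$. Because $\mathbf{f}\in \mathcal{S}_{uc}$, no subsequence of $(z_k)$ can accumulate inside $\Omega$: otherwise a small closed disk around the limit point would be a compact subset of $\Omega$ on which $f_n\to 0$ uniformly, contradicting $|f_{n_k}(z_k)|>\varepsilon$. Passing to a subsequence in $\C_{\infty}$, we may assume $z_k\to z_0 \in \partial\Omega\cup\{\infty\}$. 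Define $h\in H(\Omega)$ by $h(z)=1/(z-z_0)$ when $z_0\in \C$ (this lies in $H(\Omega)$ since $z_0\notin \Omega$) and by $h(z)=z$ when $z_0=\infty$; in both cases $|h(z_k)|\to\infty$. After a further subsequence with $\arg h(z_k)\to \theta_0$, set $\tilde h(z):=e^{-i\theta_0}h(z)\in H(\Omega)$; then $\mathrm{Re}(\tilde h(z_k))\to +\infty$.

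Now put $\varphi_c(z):=e^{c\tilde h(z)}$ for $c\in[0,\infty)$ and let $W:=\mathrm{span}\{(\varphi_c\cdot f_n):c\geq 0\}\subset H(\Omega)^{\N}$. The choice $c=0$ gives $\mathbf{f}\in W$. A typical element of $W$ has the form $\mathbf{g}=(G\cdot f_n)$ with $G(z)=\sum_{j=1}^{m}\lambda_j e^{c_j\tilde h(z)}$ for distinct $c_j\geq 0$ and $\lambda_j\in \C\setminus\{0\}$; we order $0\leq c_1<\cdots<c_m$. For each compact $K\subset\Omega$ one has $\sup_{z\in K}|G(z)f_n(z)|\leq \sup_{z\in K}|G(z)|\cdot \sup_{z\in K}|f_n(z)|\to 0$, so $\mathbf{g}\in\mathcal{S}_{uc}$. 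Setting $w_k:=\tilde h(z_k)$ (so $\mathrm{Re}(w_k)\to +\infty$), the factoring argument used for \eqref{Suc-Su algebrable.1} yields $|G(z_k)|=\big|\sum_j\lambda_j e^{c_jw_k}\big|\to +\infty$ when $c_m>0$, while $G\equiv \lambda_1\neq 0$ when $c_m=0$. In either case $|G(z_k)f_{n_k}(z_k)|$ is bounded below by a positive quantity (diverging to $+\infty$ when $c_m>0$), so $\mathbf{g}\notin \mathcal{S}_u$. This simultaneously establishes $W\setminus\{0\}\subset \mathcal{S}_{uc}\setminus\mathcal{S}_u$ and the linear independence of $\{(\varphi_c\cdot f_n):c\geq 0\}$, whence $\dim W=\mathfrak{c}$.

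The main obstacle relative to Theorem \ref{Sp-Suc lineabilidad puntual} is that the points witnessing failure of uniform convergence on $\Omega$ must escape every compact subset of $\Omega$, so the limit $z_0$ lies in $\partial\Omega\cup\{\infty\}$ rather than in $\Omega$. The maximum-modulus circle trick of the previous theorem is therefore unavailable, and we compensate by twisting the exponentials through the auxiliary function $\tilde h$, whose modulus blows up precisely where $(z_k)$ escapes, while keeping each $\varphi_c$ holomorphic on $\Omega$.
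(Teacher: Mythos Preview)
Your proof is correct and follows the same overall blueprint as the paper's, but with one genuine simplification. The paper, after splitting into the cases $z_k\to\infty$ and $z_k\to z_0\in\partial\Omega$, builds \emph{$n$-dependent} multipliers $\varphi_{c,n}$: for each index $n=n_k$ it inserts a unimodular rotation $\alpha_k$ (so that $\alpha_k z_k=|z_k|$, respectively $\alpha_k/(z_k-z_0)=1/|z_k-z_0|$), which forces the exponent $c_j\alpha_k z_k$ (resp.\ $c_j\alpha_k/(z_k-z_0)$) to be \emph{exactly} real and positive at $z_k$. You instead extract one more subsequence so that the unit vectors $h(z_k)/|h(z_k)|$ converge, apply a single global rotation $e^{-i\theta_0}$, and use the fixed multiplier $\varphi_c=e^{c\tilde h}$ independent of $n$. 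This makes $W$ take the cleaner form $\{G\cdot\mathbf f: G\in\mathrm{span}\{e^{c\tilde h}:c\ge0\}\}$ and your verification of $\mathbf g\in\mathcal S_{uc}$ is shorter (one fixed function $G$ bounded on each compact $K$, no need for the uniform-in-$k$ bound $\sup_{|z|\le R}|e^{c\alpha_k z}|=e^{cR}$ that the paper uses). The cost is the extra compactness step for $\arg h(z_k)$, which is harmless. Either route delivers the same factoring estimate $|G(z_k)|\to\infty$ needed to show $\mathbf g\notin\mathcal S_u$.
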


\begin{proof}
Let us fix a sequence \,${\bf f} = (f_n) \in \mathcal{S}_{uc} \setminus \mathcal{S}_u$. Since \,$(f_n)$ \,does not converge to zero uniformly on \,$\Omega$, there is a subsequence \,$(f_{n_k})$ \,and a positive number \,$\varepsilon$ \,such that
\begin{equation*}
  \sup_{z\in\Omega}|f_{n_k}(z)| > \varepsilon
\end{equation*}
for all \,$k\in\N$. For each \,$k\in\N$ \,there is \,$z_k\in\Omega$ \,such that \,$|f_{n_k}(z_{n_k})|>\varepsilon$. The sequence \,$\left\{z_k:k\in\N\right\}$ \,cannot be relatively compact in \,$\Omega$ \,because in such a case \,$(f_n)$ \,would converge to zero uniformly on the compact set \,$\overline{\left\{z_k:k\in\N\right\}}$, which is clearly false. Thus, there are two possibilities:
\begin{itemize}
  \item The sequence \,$\left\{z_k:k\in\N\right\}$ \,is unbounded.
  \item There is a point \,$z_0 \in \partial \Omega$ \,such that \,$z_0 \in \overline{\left\{z_k: \, k \in \N \right\}}$.
\end{itemize}

\vskip 2pt

We shall first suppose that \,$\left\{z_k:k\in\N\right\}$ \,is unbounded. Then there is a subsequence, that we will also label \,$(z_k)$, such that
\,$\lim_{k\to\infty}|z_k|=+\infty$. For each \,$k\in\N$ \,there is \,$\alpha_k\in\C$ \,such that \,$|\alpha_k|=1$ \,and \,$\alpha_k z_k=|z_k|$. Given \,$c\geq 0$ \,and \,$k\in\N$, we define the function \,$\varphi_{c,n}$ \,on \,$\C$ \,as follows:
\begin{equation*}
  \varphi_{c,n}(z)=\begin{cases}
                     e^{c \alpha_k z} & \text{if $n=n_k$ for some } k\in\N \\
                     e^{c z}          & \text{if } n\notin\{n_k:k\in\N\}.
                   \end{cases}
\end{equation*}
Let \,$W$ \,be the following vector subspace of \,$H(\C)^{\N}$:
\begin{equation*}
  W = {\rm span} \left\{(\varphi_{c,n}\cdot f_n) : c\in[0,+\infty)\right\}.
\end{equation*}
Choosing \,$c=0$, we obtain that \,${\bf f}=(f_n)\in W$. We will prove that \,$\dim(W)=\mathfrak{c}$ \,and that \,$W\setminus\{0\}$ \,is contained in \,$\mathcal{S}_{uc}\setminus\mathcal{S}_u$.

\vskip 3pt

Let \,$m\in\N$, \,$\lambda_1,\ldots,\lambda_m\in\C\setminus\{0\}$, and \,$0\leq c_1<\cdots<c_m$. For each compact subset \,$K\subset\Omega$ \,there is \,$R>0$ \,such that \,$K\subset \overline{D}(0,R)$. Note that
\begin{equation*}
  \sup_{z\in\overline{D}(0,R)}\left|e^{c\alpha_k z}\right|
  = \sup_{z\in\overline{D}(0,R)}\left|e^{cz}\right|
  = e^{cR}
\end{equation*}
for every \,$c\geq 0$ \,and every \,$k\in\N$. Therefore,
\begin{align*}
  \sup_{z\in K}\left|\sum_{j=1}^{m}\lambda_j \varphi_{c_j,n}(z)\cdot f_n(z)\right|
  &\leq \sum_{j=1}^{m}|\lambda_j| \cdot \sup_{z\in \overline{D}(0,R)}\left|\varphi_{c_j,n}(z)\right|
  \cdot \sup_{z\in K}|f_n(z)| \\
  &= \sum_{j=1}^{m}|\lambda_j| \cdot e^{c_j R} \cdot \sup_{z\in K}|f_n(z)| \longrightarrow 0
  \text{ as } n\to\infty
\end{align*}
because \,$(f_n)$ \,converges to zero uniformly on compacta. This proves that the sequence \,$\left(\sum_{j=1}^{m}\lambda_j \varphi_{c_j,n}\cdot f_n\right)$ \,belongs to \,$\mathcal{S}_{uc}$.

\vskip 3pt

Now we will prove that \,$\left(\sum_{j=1}^{m}\lambda_j \varphi_{c_j,n}\cdot f_n\right)$ \,does not converge to zero uniformly on \,$\left\{z_k:k\in\N\right\}$. Since \,$c_1<\cdots<c_m$ \,and \,$\lim_{k\to\infty}|z_k|=+\infty$, there is \,$k_0\in\N$ \,such that
\begin{equation*}
  \left|\sum_{j=1}^{m-1}\lambda_j e^{(c_j-c_m)|z_k|}\right| \leq \frac{|\lambda_m|}{2}
\end{equation*}
for all \,$k\geq k_0$. Then
\begin{align*}
  \sup_{k\geq k_0}\left|\sum_{j=1}^{m}\lambda_j \varphi_{c_j,n_k}(z_k)\cdot f_{n_k}(z_k)\right|
  &\geq \sup_{k\geq k_0} \varepsilon\left|\sum_{j=1}^{m}\lambda_j e^{c_j \alpha_k z_k}\right|
  = \sup_{k\geq k_0}\varepsilon\left|\sum_{j=1}^{m}\lambda_j e^{c_j|z_k|}\right| \\
  &= \sup_{k\geq k_0} \varepsilon \cdot e^{c_m|z_k|}
  \left|\lambda_m + \sum_{j=1}^{m-1} \lambda_j e^{(c_j-c_m)|z_k|}\right| \\
  &\geq \sup_{k\geq k_0} \varepsilon \cdot e^{c_m|z_k|} \cdot \frac{|\lambda_m|}{2}
  \geq \varepsilon \cdot \frac{|\lambda_m|}{2} > 0.
\end{align*}
This proves that \,$\left(\sum_{j=1}^{m}\lambda_j \varphi_{c_j,n}\cdot f_n\right)$ \,does not converge to zero uniformly on \,$\left\{z_k:k\geq k_0\right\}$. In particular, \,$\left(\sum_{j=1}^{m}\lambda_j \varphi_{c_j,n}\cdot f_n\right)$ \,is not the zero sequence, which in turn implies that the set \,$\left\{(\varphi_{c,n}\cdot f_n) : c\in[0,+\infty)\right\}$ \,is linearly independent, so \,$\dim(W)=\mathfrak{c}$. That completes the proof in the case that \,$\left\{z_k:k\in\N\right\}$ \,is unbounded.

\vskip 3pt

Next, we assume that there is a point \,$z_0\in\partial\Omega$ \,such that \,$z_0\in\overline{\left\{z_k:k\in\N\right\}}$. Then there is a subsequence, that we will continue labeling \,$(z_k)$, such that \,$\lim_{k\to\infty}z_k=z_0$. For each \,$k\in\N$, there is \,$\alpha_k\in\C$ \,such that \,$|\alpha_k|=1$ \,and
\begin{equation*}
  \frac{\alpha_k}{z_k-z_0}=\frac{1}{|z_k-z_0|}.
\end{equation*}
Given \,$c\geq 0$ \,and \,$k\in\N$, we define the function \,$\varphi_{c,n}\in H(\Omega)$ \,as follows:
\begin{equation*}
  \varphi_{c,n}(z)=\begin{cases}
                     e^{\frac{c\alpha_k}{z-z_0}} & \text{if $n=n_k$ for some } k\in\N \\
                     e^{\frac{c}{z-z_0}}         & \text{if } n\notin\{n_k:k\in\N\}.
                   \end{cases}
\end{equation*}
The vector space \,$W$ \,is defined as
\begin{equation*}
  W = {\rm span} \left\{(\varphi_{c,n}\cdot f_n) : c\in[0,+\infty)\right\}
\end{equation*}
and satisfies that \,${\bf f}=(f_n)\in W$. As before, we shall prove that \,$\dim(W)=\mathfrak{c}$ \,and that \,$W\setminus\{0\}$ \,is contained in \,$\mathcal{S}_{uc}\setminus\mathcal{S}_u$.

\vskip 3pt

Let \,$m\in\N$, $\lambda_1,\ldots,\lambda_m\in\C\setminus\{0\}$, and \,$0\leq c_1<\cdots<c_m$. For each compact subset \,$K\subset\Omega$ \,there is \,$R>0$ \,such that \,$K\cap D(z_0,R)=\varnothing$. Note that if \,$c\geq 0$ \,and \,$k\in\N$, then
\begin{equation*}
  \sup_{|z-z_0|\geq R}\left|e^{\frac{c\alpha_k}{z-z_0}}\right|
  = \sup_{|w|\leq 1/R}\left|e^{c\alpha_k w}\right|
  = \sup_{|w|\leq 1/R}\left|e^{cw}\right|
  = e^{c/R}.
\end{equation*}
Therefore,
\begin{align*}
  \sup_{z\in K}\left|\sum_{j=1}^{m}\lambda_j \varphi_{c_j,n}(z)\cdot f_n(z)\right|
  &\leq \sum_{j=1}^{m}|\lambda_j| \cdot \sup_{|z-z_0|\geq R}\left|\varphi_{c_j,n}(z)\right|
  \cdot \sup_{z\in K}|f_n(z)| \\
  &= \sum_{j=1}^{m}|\lambda_j| \cdot e^{c_j/R} \cdot \sup_{z\in K}|f_n(z)| \longrightarrow 0
  \text{ as } n\to\infty
\end{align*}
because \,$(f_n)$ \,converges to zero uniformly on compacta. This proves that \,$\left(\sum_{j=1}^{m}\lambda_j \varphi_{c_j,n}\cdot f_n\right)$ \,belongs to \,$\mathcal{S}_{uc}$.

\vskip 3pt

Now we will prove that \,$\left(\sum_{j=1}^{m}\lambda_j \varphi_{c_j,n}\cdot f_n\right)$ \,does not converge to zero uniformly on \,$\left\{z_k:k\in\N\right\}$. Since \,$c_1<\cdots<c_m$ \,and \,$\lim_{k\to\infty}z_k=z_0$, there is \,$k_0\in\N$ \,such that
\begin{equation*}
  \left|\sum_{j=1}^{m-1}\lambda_j e^{\frac{c_j-c_m}{|z_k-z_0|}}\right| \leq \frac{|\lambda_m|}{2}
\end{equation*}
for all \,$k\geq k_0$. Then
\begin{align*}
  \sup_{k\geq k_0}\left|\sum_{j=1}^{m}\lambda_j \varphi_{c_j,n_k}(z_k)\cdot f_{n_k}(z_k)\right|
  &\geq \sup_{k\geq k_0}
   \varepsilon\left|\sum_{j=1}^{m}\lambda_j e^{\frac{c_j \alpha_k}{z_k-z_0}}\right|
  = \sup_{k\geq k_0} \varepsilon\left|\sum_{j=1}^{m}\lambda_j e^{\frac{c_j}{|z_k-z_0|}}\right| \\
  &= \sup_{k\geq k_0} \varepsilon \cdot e^{\frac{c_m}{|z_k-z_0|}}
  \left|\lambda_m + \sum_{j=1}^{m-1}\lambda_j e^{\frac{c_j-c_m}{|z_k-z_0|}}\right| \\
  &\geq \sup_{k\geq k_0} \varepsilon \cdot e^{\frac{c_m}{|z_k-z_0|}} \cdot \frac{|\lambda_m|}{2}
  \geq \varepsilon \cdot \frac{|\lambda_m|}{2} > 0.
\end{align*}
This proves that \,$\left(\sum_{j=1}^{m}\lambda_j \varphi_{c_j,n}\cdot f_n\right)$ \,does not converge to zero uniformly on \,$\left\{z_k:k\geq k_0\right\}$. In particular, \,$\left(\sum_{j=1}^{m}\lambda_j \varphi_{c_j,n}\cdot f_n\right)$ \,cannot be the zero sequence, which in turn implies that the set \,$\left\{(\varphi_{c,n}\cdot f_n) : c\in[0,+\infty)\right\}$ \,is linearly independent and so \,$\dim(W)=\mathfrak{c}$. That completes the proof in the case that \,$\left\{z_k:k\in\N\right\}$ \,is bounded.
\end{proof}

Finally, we improve Theorems \ref{Sp-Suc lineabilidad puntual} and \ref{Suc-Su lineabilidad puntual} in the sense that the subspaces inside our families of sequences can be chosen dense and with infinite amount of them.

\begin{theorem}\label{Pointwise dense-lineable}
The sets \,$\mathcal{S}_p\setminus\mathcal{S}_{uc}$ \,and \,$\mathcal{S}_{uc}\setminus\mathcal{S}_u$ \,are infinitely pointwise $\mathfrak{c}$-dense-lineable in \,$H(\Omega)^{\N}$.
\end{theorem}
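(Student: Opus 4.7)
The plan is to apply Theorem \ref{A stronger than B. Pointwise} twice, separately for each of the two sets, with $X = H(\Omega)^{\N}$ (which is separable and metrizable, as established in Section \ref{background}) and $\alpha = \mathfrak{c}$. For the first assertion I would take $A = \mathcal{S}_p \setminus \mathcal{S}_{uc}$ and $B = \mathcal{S}_{uc}$, while for the second I would take $A = \mathcal{S}_{uc} \setminus \mathcal{S}_u$ and $B = \mathcal{S}_u$. In both situations $B$ is itself a vector subspace of $H(\Omega)^{\N}$, and $A \cup B$ equals $\mathcal{S}_p$ or $\mathcal{S}_{uc}$ respectively, which are also vector subspaces.

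Conditions (1) and (2) of Theorem \ref{A stronger than B. Pointwise} drop out immediately from this structure. Indeed, if $\mathbf{f}\in A$ and $\mathbf{g}\in B$, then $\mathbf{f}+\mathbf{g}\in A\cup B$; if it were in $B$, then $\mathbf{f} = (\mathbf{f}+\mathbf{g})-\mathbf{g}\in B$, contradicting $\mathbf{f}\in A$. Hence $A+B\subset A$, while $A\cap B=\varnothing$ holds by definition since $A = (A\cup B)\setminus B$. Condition (3), the pointwise $\mathfrak{c}$-lineability of $A$, is exactly Theorem \ref{Sp-Suc lineabilidad puntual} in the first case and Theorem \ref{Suc-Su lineabilidad puntual} in the second.

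Only condition (4), the dense-lineability of $B$, still requires a comment, and I would verify it by exhibiting a single dense linear subspace of $H(\Omega)^{\N}$ contained in $\mathcal{S}_u$, which then also lies in $\mathcal{S}_{uc}$ and so serves both applications. The natural candidate is
\begin{equation*}
V := \bigl\{(f_n)\in H(\Omega)^{\N} : \text{there exists } N\in\N \text{ with } f_n\equiv 0 \text{ for all } n\ge N\bigr\},
\end{equation*}
the space of eventually zero sequences. Every such sequence tends to $0$ uniformly on $\Omega$, so $V\subset\mathcal{S}_u$. Density of $V$ in the product topology of $H(\Omega)^{\N}$ is routine: a basic open neighbourhood of a given $\mathbf{g}=(g_n)\in H(\Omega)^{\N}$ constrains only finitely many coordinates, say those with index $n<N$, so the truncation obtained from $\mathbf{g}$ by replacing $g_n$ by $0$ for $n\ge N$ lies simultaneously in $V$ and in that neighbourhood.

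With the four hypotheses verified, Theorem \ref{A stronger than B. Pointwise} delivers the infinite pointwise $\mathfrak{c}$-dense-lineability in both cases. I do not foresee any substantial obstacle: the nontrivial content has already been absorbed into Theorem \ref{A stronger than B. Pointwise} and into Theorems \ref{Sp-Suc lineabilidad puntual} and \ref{Suc-Su lineabilidad puntual}, and the remaining density claim is the standard product-topology observation.
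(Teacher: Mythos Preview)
Your proposal is correct and follows essentially the same route as the paper: apply Theorem \ref{A stronger than B. Pointwise} together with Theorems \ref{Sp-Suc lineabilidad puntual} and \ref{Suc-Su lineabilidad puntual}, using the eventually zero sequences as the dense linear subspace. The only cosmetic difference is that the paper takes $B$ to be the set $\mathcal{B}$ of eventually zero sequences itself (your $V$) in both cases, whereas you take the larger $B=\mathcal{S}_{uc}$ or $B=\mathcal{S}_u$ and then witness their dense-lineability via the same $V$; either choice verifies the hypotheses in the same way.
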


\begin{proof}
Let \,$\mathcal{B}$ \,be defined as the set of all sequences \,$(g_n)$ \,of holomorphic functions on \,$\Omega$ \,with the following property: there is \,$n_0\in\N$ \,such that \,$g_n=0$ \,for every \,$n\geq n_0$. Then \,$\mathcal{B}$ \,is dense in \,$H(\Omega)^{\N}$. Indeed, let \,$\widetilde{d}$ \,denote the metric in \,$H(\Omega)^{\N}$ \,defined in Section \ref{background}. Given \,$(f_n)\subset H(\Omega)$ \,and \,$\varepsilon>0$, there exists \,$n_0\in\N$ \,such that
\begin{equation*}
  \sum_{n=n_0}^{\infty}\frac{1}{2^n} < \varepsilon.
\end{equation*}
We define \,$g_n=f_n$ \,if \,$n<n_0$ \,and \,$g_n=0$ \,if \,$n\geq n_0$. Then \,$(g_n)\in \mathcal{B}$. Moreover,
\begin{align*}
  \widetilde{d}((f_n),(g_n))
  = \sum_{n=1}^{\infty}\frac{1}{2^n} \cdot \frac{d(f_n,g_n)}{1+d(f_n,g_n)}
  = \sum_{n=n_0}^{\infty}\frac{1}{2^n} \cdot \frac{d(f_n,0)}{1+d(f_n,0)}
  \leq \sum_{n=n_0}^{\infty}\frac{1}{2^n} < \varepsilon.
\end{align*}
This proves that \,$\mathcal{B}$ \,is dense in \,$H(\Omega)^{\N}$. Since \,$\mathcal{B}$ \,is in fact a vector space, it follows that it is even dense-lineable in \,$H(\Omega)^{\N}$.

\vskip 3pt

If \,$(f_n)\in\mathcal{S}_p\setminus \mathcal{S}_{uc}$ \,and \,$(g_n)\in\mathcal{B}$, then there is \,$n_0\in\N$ \,such that \,$f_n+g_n=f_n$ \,for all \,$n\geq n_0$. Therefore, \,$(f_n+g_n)\in \mathcal{S}_p\setminus \mathcal{S}_{uc}$. In other words, we have
\begin{equation*}
  \left(\mathcal{S}_p \setminus \mathcal{S}_{uc}\right) + \mathcal{B}
  \subset \mathcal{S}_p\setminus \mathcal{S}_{uc}.
\end{equation*}
Moreover, every sequence \,$(g_n)$ \,in \,$\mathcal{B}$ \,converges to zero uniformly on \,$\Omega$, so \,$\left(\mathcal{S}_p\setminus \mathcal{S}_{uc}\right)\cap\mathcal{B}=\varnothing$. Finally, the set \,$\mathcal{S}_p\setminus\mathcal{S}_{uc}$ \,is pointwise $\mathfrak{c}$-lineable by Theorem \ref{Sp-Suc lineabilidad puntual}. By Theorem \ref{A stronger than B. Pointwise}, we deduce that \,$\mathcal{S}_p\setminus\mathcal{S}_{uc}$ \,is infinitely pointwise $\mathfrak{c}$-dense-lineable. The proof for the set \,$\mathcal{S}_{uc}\setminus\mathcal{S}_u$ \,is analogous, applying Theorem \ref{Suc-Su lineabilidad puntual} instead of \ref{Sp-Suc lineabilidad puntual}.
\end{proof}

\section{Banach spaces inside \,$\mathcal{S}_{p}\setminus \mathcal{S}_{uc}$ \,and \,$\mathcal{S}_{uc}\setminus\mathcal{S}_u$}

\quad Finally, we shall prove that both families \,$\mathcal{S}_{p} \setminus \mathcal{S}_{uc}$ \,and \,$\mathcal{S}_{uc} \setminus \mathcal{S}_u$ \,enjoy a property that is close to spaceability. Specifically, two large Banach spaces can be constructed so as to be respectively contained, except for zero, in the mentioned families. Prior to stating our theorem, we recall the notion of weighted Banach space \,$H_v(\C)$ \,of entire functions associated to a continuous function \,$v:\C \to (0,+\infty)$, that is defined as
\begin{equation*}
  H_v(\C ) := \left\{\varphi \in H(\C) : \, \sup_{z\in\C} v(z) \cdot |\varphi(z)|< +\infty\right\}
\end{equation*}
(see, e.g., the surveys \cite{Bonet2003} and \cite{Bonet2022} and the references contained in them). It is known that \,$H_v(\C)$ \,is a Banach space when it is endowed with the norm
\begin{equation*}
  \Vert \varphi\Vert _v := \sup_{z\in \C}v(z) \cdot |\varphi(z)|
\end{equation*}
and that convergence with respect to \,$\left\Vert \cdot \right\Vert _v$ \,implies uniform convergence on compacta in \,$\C$.

\begin{theorem}\label{Thm: pseudospaceability}
There exist two Banach spaces \,$X_1,X_2 \subset H(\Omega)^{\N}$ \,satisfying the following properties:
\begin{enumerate}
  \item[\rm (a)] Both \,$X_1$ \,and \,$X_2$ \,are infinite dimensional.
  \item[\rm (b)] The norm topology of each \,$X_i$ (with \,$i=1,2$) is stronger than the one inherited from \,$H(\Omega)^{\N}$.
  \item[\rm (c)] $X_1 \setminus \{0\} \subset \mathcal{S}_{p} \setminus \mathcal{S}_{uc}$ \,and \,$X_2 \setminus \{0\} \subset \mathcal{S}_{uc} \setminus \mathcal{S}_u$.
\end{enumerate}
\end{theorem}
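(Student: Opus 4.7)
I plan to produce both $X_1$ and $X_2$ as isometric images of the same infinite-dimensional weighted Banach space $H_v(\C)$ (for a suitable weight $v$, say $v(z)=e^{-|z|}$) inside $H(\Omega)^{\N}$, the embedding being multiplication against a carefully chosen ``bad'' sequence $(f_n)\in H(\Omega)^{\N}$. For such an $(f_n)$ with at least one nontrivial term, I define
\[
T\colon H_v(\C)\to H(\Omega)^{\N},\qquad T(\varphi):=(\varphi|_\Omega\cdot f_n)_n,
\]
and set $X:=T(H_v(\C))$ with the pull-back norm $\|T(\varphi)\|_X:=\|\varphi\|_v$. Injectivity of $T$ is immediate from the identity principle: if $\varphi f_n\equiv 0$ on $\Omega$ for every $n$ and some $f_n\not\equiv 0$, then $\varphi$ vanishes on a nonempty open subset of $\Omega$, hence on all of $\C$. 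Thus $X$ is a Banach space isometric to $H_v(\C)$, yielding (a). For (b) I use the property (recalled in the statement) that $\|\cdot\|_v$-convergence implies uniform convergence on compacta of $\C$: if $\|\varphi_k-\varphi\|_v\to 0$, then for each $n$ the coordinate $\varphi_k f_n\to\varphi f_n$ converges in $H(\Omega)$, and dominated convergence applied to the series defining $\widetilde{d}$ gives $T(\varphi_k)\to T(\varphi)$ in $H(\Omega)^{\N}$. The whole task thus reduces to choosing $(f_n)$ so that $T(\varphi)$ lies in the desired difference set for every nonzero $\varphi$.

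For $X_1$ I take any $(f_n)\in\mathcal{S}_p\setminus\mathcal{S}_{uc}$, for instance one supplied by Theorem~\ref{Sp-Suc algebrable}. Pointwise convergence of $T(\varphi)$ is trivial. For $T(\varphi)\notin\mathcal{S}_{uc}$, I adapt the argument from Theorem~\ref{Sp-Suc lineabilidad puntual}: extract a compact $K\subset\Omega$, $\varepsilon>0$, a subsequence $(f_{n_k})$ and points $z_k\in K$ with $|f_{n_k}(z_k)|>\varepsilon$ and $z_{k_\ell}\to z_0\in K$, then choose $r>0$ with $\overline{D}(z_0,r)\subset\Omega$ and $\varphi$ nowhere vanishing on $\{|z-z_0|=r\}$ (possible since the zeros of the entire nonzero $\varphi$ are isolated). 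Setting $\delta:=\min_{|z-z_0|=r}|\varphi(z)|>0$ and applying the maximum modulus principle to $f_{n_{k_\ell}}$ on $\overline{D}(z_0,r)$ yields
\[
\max_{|z-z_0|=r}|\varphi(z)f_{n_{k_\ell}}(z)|\ge\delta\max_{|z-z_0|=r}|f_{n_{k_\ell}}(z)|\ge\delta|f_{n_{k_\ell}}(z_{k_\ell})|>\delta\varepsilon,
\]
so $T(\varphi)\notin\mathcal{S}_{uc}$ thanks to the compact witness $\{|z-z_0|=r\}\subset\Omega$.

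For $X_2$ the interior maximum-modulus argument is unavailable, since the bad points of a sequence in $\mathcal{S}_{uc}\setminus\mathcal{S}_u$ necessarily escape every compact subset of $\Omega$; I instead tailor $(f_n)$ so that its modulus at designated points explodes fast enough to overwhelm any fixed order of vanishing of $\varphi$. If $\Omega\neq\C$, fix a finite $z_0\in\partial\Omega$ and set $f_n(z):=e^{-n^2}/(z-z_0)^n\in H(\Omega)$. Then $\sup_K|f_n|\le e^{-n^2}\operatorname{dist}(K,z_0)^{-n}\to 0$ for each compact $K\subset\Omega$, so $(f_n)\in\mathcal{S}_{uc}$; picking $z_n\in\Omega$ with $|z_n-z_0|\le e^{-2n}$ and expanding a nonzero $\varphi$ at $z_0$ as $\varphi(z)=a_m(z-z_0)^m+O((z-z_0)^{m+1})$ with $a_m\neq 0$ yields, for $n$ larger than the order $m$,
\[
|\varphi(z_n)f_n(z_n)|\ge\tfrac{|a_m|}{2}\,|z_n-z_0|^{m-n}e^{-n^2}\ge\tfrac{|a_m|}{2}\,e^{\,n^{2}-2nm}\longrightarrow+\infty,
\]
so $T(\varphi)\notin\mathcal{S}_u$. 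In the exceptional case $\Omega=\C$ I use instead $f_n(z):=z^n/e^{n^2}$: again $\sup_{|z|\le R}|f_n|=R^n/e^{n^2}\to 0$, while $|f_n|\equiv 1$ on the circle $|z|=e^n$, so $\max_{|z|=e^n}|\varphi f_n|=M(e^n,\varphi)$, which for any nonzero entire $\varphi=z^m h$ with $h(0)\neq 0$ is bounded below by $e^{nm}|h(0)|\ge|h(0)|>0$. The main obstacle of the entire theorem is precisely this $X_2$ construction, namely producing a single bad sequence whose growth overcomes every possible order of vanishing of an arbitrary nonzero multiplier in $H_v(\C)$; it is resolved by letting the exponent diverge with $n$, so that for each fixed $\varphi$ the multiplicative advantage eventually wins.
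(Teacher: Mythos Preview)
Your proof is correct and shares the paper's overall framework: both of you embed the weighted Banach space $H_v(\C)$ (with $v(z)=e^{-|z|}$) into $H(\Omega)^{\N}$ via the map $\varphi\mapsto(\varphi\cdot f_n)$ for a suitably chosen sequence $(f_n)$, and both verify (a) and (b) in essentially the same way. The substantive difference lies in how (c) is obtained.

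For $X_1$, the paper chooses a \emph{specific} Runge-constructed sequence satisfying $|f_n(s_{n+1})|>e^n-1$ with $s_n\asymp n^{-2}$, and then argues that if $\varphi$ vanishes at $0$ with multiplicity $\mu$, the polynomial decay $|\varphi(s_{n+1})|\gtrsim n^{-2\mu}$ is swamped by the exponential growth of $f_n(s_{n+1})$. You instead take an \emph{arbitrary} sequence in $\mathcal{S}_p\setminus\mathcal{S}_{uc}$ and transplant the maximum-modulus argument already used in the paper's Theorem~\ref{Sp-Suc lineabilidad puntual}; this is cleaner and shows that the choice of $(f_n)$ is irrelevant here.

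For $X_2$, the paper again uses sequences coming from Runge's theorem (when $\Omega\neq\C$) with $t_n=R-\gamma/n^2$ and the same multiplicity argument, and the sequence $(z/n)$ together with Liouville's theorem when $\Omega=\C$. Your explicit choices $f_n(z)=e^{-n^2}(z-z_0)^{-n}$, respectively $f_n(z)=z^n e^{-n^2}$, bypass Runge entirely: letting the exponent grow with $n$ guarantees that $|f_n(z_n)|$ beats any fixed order of vanishing of $\varphi$ at $z_0$. This is a genuinely more elementary construction; the price is that the sequence is tailored to the theorem at hand rather than recycled from the earlier algebrability proofs, but that is harmless here.
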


\begin{proof}
According to the proof of Theorem \ref{Sp-Suc algebrable}, we may suppose that \,$0\in\Omega$. Let \,$R=\sup\left\{x\in\R : x\geq 0 \text{ and } [0,x]\subset \Omega\right\}$ \,and let \,$(t_n)$ \,be any increasing sequence of strictly positive numbers such that \,$\lim_{n\to\infty}t_n=R$. Setting
\begin{equation}\label{Eq: mayoración de los sn}
  s_n:=\frac{t_1}{2n^2}
\end{equation}
for each \,$n\in\N$, we have that \,$(s_n)$ \,is a strictly decreasing sequence of positive numbers such that \,$s_1<t_1$ \,and \,$\lim_{n\to\infty}s_n=0$. As it was shown in the proof of Theorem \ref{Sp-Suc algebrable}, there exists a sequence of holomorphic functions \,${\bf f}=(f_n)\in \mathcal{S}_{p} \setminus \mathcal{S}_{uc}$ \,such that
\begin{equation*}
  |f_n(s_{n+1}) - e^n| < e^{-n^2}
\end{equation*}
(take \,$c=1$ \,in the construction therein). As a consequence, we obtain that
\begin{equation} \label{Eq: mayoración de fn(sn)}
  |f_n(s_{n+1})| > e^n - 1
\end{equation}
for all \,$n\in\N$.

We consider the Banach space \,$H_v(\C)$ \,corresponding to \,$v(z):= e^{-|z|}$ \,and define
\begin{equation*}
  X_1 := \big\{\varphi \cdot {\bf f} = (\varphi \cdot f_n) : \, \varphi \in H_v (\C) \big\}.
\end{equation*}
Plainly, \,$X_1$ \,is a vector space and \,$X_1 \subset H(\Omega)^{\N}$.

\vskip 3pt

Since \,${\bf f}\neq 0$, there is \,$m\in\N$ \,with \,$f_m\neq 0$. Hence, by continuity, there is a nonempty open set \,$U \subset \Omega$ \,such that \,$f_m(z) \neq 0$ \,for all \,$z \in U$. Now, if \,${\bf g} = (g_n) \in X_1$ \,and \,$\varphi_1 \cdot {\bf f} = {\bf g} = \varphi_2 \cdot {\bf f}$, where \,$\varphi_1, \varphi_2 \in H_v(\C)$, then \,$\varphi_1 \cdot f_m = \varphi_2 \cdot f_m$, so \,$\varphi_1 = \varphi_2$ \,on \,$U$. It follows from the identity principle that \,$\varphi_1 = \varphi_2$ \,on the whole \,$\C$. Therefore, the mapping
\begin{equation*}
  {\bf g} = \varphi \cdot {\bf f} \in X_1 \longmapsto \|{\bf g}\|_{X_1} := \|\varphi \|_v
\end{equation*}
is well defined and it is in fact a norm on \,$X_1$. Since \,$\|\cdot\|_v$ \,is a complete norm on \,$H_v(\C)$, it follows that \,$\|\cdot \|_{X_1}$ \,is a complete norm on \,$X_1$, so as to make it a Banach space.

\vskip 3pt

Now, let us prove (a), (b), and (c) for \,$X_1$:

\vskip 3pt

\noindent (a) That \,dim$(X_1)= \infty$ \,follows from the fact that the monomials \,$\varphi_k(z)=z^k$ (with \,$k\in\N$) are linearly independent and belong to \,$H_v(\C)$. This implies that the functions \,$z^k \cdot f_m(z)$ (with \,$k\in\N$) \,are also linearly independent (recall that \,$f_m(z)\neq 0$ \,for every $z$ \,in a nonempty open subset \,$U\subset\Omega$). Hence, trivially, the set of sequences \,$\left\{z^k \cdot {\bf f} : k\in\N\right\}$ \,is linearly independent and contained in \,$X_1$, so \,$X_1$ \,is infinite dimensional.

\vskip 3pt

\noindent (b) We must show that if \,$(\psi_k)$ \,is a sequence in \,$H_v(\C)$ \,and \,$\lim_{k\to\infty}\|\psi_k\cdot{\bf f}\|_{X_1}=0$, then also \,$\lim_{k\to\infty}\psi_k\cdot{\bf f}=0$ \,with respect to the product topology on \,$H(\Omega)^{\N}$, that is, \,$\lim_{k\to\infty}\psi_k\cdot f_n=0$ \,uniformly on every compact subset of \,$\C$ \,for each \,$n\in\N$. For a prescribed \,$n\in\N$ \,and a compact subset \,$K\subset\Omega$, let us define
\begin{equation*}
  \alpha := e^{\sup_{z \in K} |z|} \cdot \sup_{z \in K} |f_n(z)| \in [0,+\infty).
\end{equation*}
Then
\begin{align*}
  \sup_{z\in K} |\psi_k(z) \cdot f_n(z)|
  &\leq \alpha \cdot \sup_{z \in K}\left(e^{-|z|} \cdot |\psi_k(z)|\right)
  \leq \alpha \cdot \sup_{z \in \C} \left(e^{-|z|} \cdot |\psi_k(z)|\right) \\
  &= \alpha \cdot \|\psi_k \|_v
  = \alpha \cdot \|\psi_k \cdot {\bf f} \|_{X_1} \longrightarrow 0  \text{ \,as } k\to\infty.
\end{align*}
This had to be shown.

\vskip 3pt

\noindent (c) Finally, suppose that \,${\bf g}\in X_1\setminus \{0\}$. Then there is \,$\varphi\in H_v(\C)$ \,such that \,$\varphi\neq 0$ \,and \,${\bf g}=\varphi \cdot {\bf f}$. On the one hand, given \,$z_0\in\Omega$, we have \,$\lim_{n\to \infty}f_n(z_0)=0$ \,and so \,$\lim_{n\to\infty}g_n(z_0) = \lim_{n\to \infty}\varphi(z_0) \cdot f_n(z_0) = 0$. In other words, \,${\bf g}\in\mathcal{S}_p$. On the other hand, since \,$\varphi \neq 0$, we can consider the multiplicity \,$\mu\in\N\cup\{0\}$ \,of \,$0$ \,as a zero of \,$\varphi$. There is an entire function \,$\phi$ \,such that \,$\phi(0)\neq 0$ \,and \,$\varphi(z)=\phi(z)\cdot z^{\mu}$ \,for all \,$z\in\C$ (see \cite[Theorem 10.18]{Rudin}). Then there exists a constant \,$c \in (0,+\infty)$ \,such that
\begin{equation*}
  |\varphi (z)| \geq c \cdot |z|^{\mu}
\end{equation*}
on some closed disc \,$\overline{D}(0,\delta)\subset \Omega$. Since \,$\lim_{n\to\infty}s_n=0$, there is \,$n_0\in\N$ \,such that \,$s_n\in\overline{D}(0,\delta)$ \,whenever \,$n \geq n_0$. Now, it follows from \eqref{Eq: mayoración de los sn} and \eqref{Eq: mayoración de fn(sn)} that for all \,$n \geq n_0$ \,we have
\begin{align*}
  \sup_{z\in\overline{D}(0,\delta)} |g_n(z)|
  &= \sup_{z\in\overline{D}(0,\delta)} |\varphi(z)| \cdot |f_n(z)|
  \geq |\varphi(s_{n+1})| \cdot |f_n(s_{n+1})| \\
  &\geq c \cdot |s_{n+1}|^{\mu} \cdot (e^n - 1)
  = \frac{c \cdot t_1^\mu \cdot (e^n - 1)}{2^{\mu} \cdot n^{2 \mu}} \longrightarrow +\infty
  \text{ \ as } \, n \to \infty.
\end{align*}
Consequently, ${\bf g} \not\in \mathcal{S}_{uc}$, as desired.

\vskip 3pt

The construction of the Banach space \,$X_2$ \,is analogous, with the sole exception that the sequence \,${\bf f}=(f_n)$ \,is extracted from \,$\mathcal{S}_{uc} \setminus \mathcal{S}_{u}$ \,instead of \,$\mathcal{S}_{p} \setminus \mathcal{S}_{uc}$. Then the properties (a) and (b) can be proved exactly in the same way. Thus, it only remains to show (c) in this case, that is, \,$X_2\setminus \{0\} \subset \mathcal{S}_{uc} \setminus \mathcal{S}_{u}$.

\vskip 3pt

To this end, let \,${\bf g} = (g_n) = (\varphi \cdot f_n) \in X_2\setminus\{0\}$. On the one hand, given a compact set \,$K \subset \Omega$, the function \,$\varphi$ \,is bounded on it, so that there is a constant \,$\beta \in (0,+\infty)$ \,with \,$|\varphi(z)| \leq \beta$ \,for all \,$z\in K$. Hence,
\begin{equation*}
  \sup_{z\in K} |g_n(z)| \leq \beta \cdot \sup_{z\in K} |f_n(z)| \longrightarrow 0
  \text{ \,as } n \to\infty
\end{equation*}
because \,$(f_n) \in \mathcal{S}_{uc}$. Thus, \,${\bf g} \in \mathcal{S}_{uc}$. It remains to prove that \,${\bf g} \not \in \mathcal{S}_{u}$. At this point, we distinguish two cases:
\begin{itemize}
\item $\Omega=\C$. Then simply select \,${\bf f} = (z/n)$ \,to define \,$X_2$. Since \,${\bf g}\neq 0$, we have that \,$\varphi \neq 0$. Then \,$z\cdot \varphi(z)$ \,is not a constant function and so it cannot be bounded on \,$\C$ \,by the Liouville's theorem (see, e.g., \cite[p.~122]{Ahlfors}). Therefore,
    \begin{equation*}
      \sup_{z\in \C} |g_n(z)| = \frac{1}{n} \cdot \sup_{z \in \C} |z \cdot \varphi (z)| = +\infty
    \end{equation*}
    for all \,$n\in\N$. Thus, ${\bf g} \not \in \mathcal{S}_{u}$.

\item $\Omega \neq \C$. Following the proof of Theorem \ref{Suc-Su algebrable}, we can assume after a translation and a rotation that \,$0 \in \Omega$ \,and that
    \begin{equation*}
      R:= \sup\left\{x\in\R : x\geq 0 \text{ and } [0,x]\subset\Omega\right\} \in \partial\Omega.
    \end{equation*}
    Let \,$0<\gamma<\min\{1,R\}$. For each \,$n\in\N$, let
    \begin{equation*}
      K_n := \left\{z\in\C : |z|\leq n \, \text{ and } \, {\rm dist}(z,\C\setminus\Omega)\geq \frac{1}{n}\right\}
    \end{equation*}
    and
    \begin{equation*}
      t_n := R-\frac{\gamma}{n^2}.
    \end{equation*}
    Then \,$(t_n)$ \,is a strictly increasing sequence of positive numbers in \,$\Omega$ \,such that \,$\lim_{n\to\infty}t_n=R$. In addition, \,$R-t_n<1/n$, so \,$t_n\notin K_n$ \,for each \,$n\in\N$. Hence, \,$(K_n)$ \,and \,$(t_n)$ \,satisfy the properties required in the proof of Theorem \ref{Suc-Su algebrable}. Therefore, the mentioned proof indicates that the sequence \,$(f_n)$ \,can be selected so that \,$|f_n(t_n)-e^n|<e^{-n^2}$ \,for all \,$n\in\N$. This implies
    \begin{equation*}
      |f_n(t_n)| > e^n - 1
    \end{equation*}
    for all \,$n\in\N$. Letting \,$\mu \in \N \cup \{0\}$ \,be the multiplicity of the point \,$R$ \,as a zero of \,$\varphi$, there exists \,$c \in (0,+\infty)$ \,such that
    \begin{equation*}
      |\varphi(z)| \geq c \cdot |z-R|^\mu
    \end{equation*}
    for all \,$z$ \,on some neighbourhood \,$V$ \,of \,$R$. If \,$n_0 \in \N$ \,is chosen such that \,$t_n \in V$ \,for all \,$n\geq n_0$, then for \,$n\geq n_0$ \,we get that
    \begin{align*}
      \sup_{z\in \Omega} |g_n(z)| &\geq |\varphi (t_n)| \cdot |f_n(t_n)|
      \geq c \cdot (R - t_n)^{\mu} \cdot (e^n - 1) \\
      &= \frac{c \cdot \gamma^\mu \cdot (e^n - 1)}{n^{2 \mu}} \longrightarrow +\infty
       \text{ \,as } n \to \infty.
    \end{align*}
    Consequently, ${\bf g} \not\in \mathcal{S}_{u}$, as desired.
\end{itemize}
The proof is now finished.
\end{proof}

\section{Final remarks and questions}

\begin{question}
Regarding Theorem \ref{Thm: pseudospaceability}, it would be interesting to know whether or not the families \,$\mathcal{S}_{p} \setminus \mathcal{S}_{uc}$ \,and \,$\mathcal{S}_{uc} \setminus \mathcal{S}_u$ \,are spaceable \textit{in \,$H(\Omega)^{\N}$}, that is, in their natural Fr\'echet space. And, similarly to the results in section 4, is it possible to fix \,${\bf f}$ \,in each of these families so that it is contained in the corresponding closed subspaces?
\end{question}


\begin{remark}
A question related to the topic of this paper is that of the {\it topological size} of the sets under analysis. The problem reveals to be clearer in the case of \,$\mathcal{S}_{uc} \setminus \mathcal{S}_u$, thanks to the Baire category theorem. In fact, we can say that, in a topological sense, the sequences of holomorphic functions converging compactly to zero are generically not uniformly convergent. However, to see this, we meet a serious handicap: $\mathcal{S}_{uc}$ \,is {\it not} closed as a subset of \,$H(\Omega)^{\N}$. Indeed, the sequence
\begin{equation*}
  {\bf f}_i = (\underbrace{1,1,\dots ,1}_{i \text{ times}},0,0,0, \dots )
\end{equation*}
belongs to \,$\mathcal{S}_{uc}$ \,for each \,$i\in\N$ \,and \,$\lim_{i\to\infty}{\bf f}_i=(1,1,1,\dots) \not\in \mathcal{S}_{uc}$. Hence, \,$\mathcal{S}_{uc}$ \,is not a complete metric space under the inherited distance \,$\widetilde{d}$ \,defined in Section \ref{background} and, consequently, Baire's theorem is not at our disposal. Nevertheless, it is possible to replace \,$\widetilde{d}$ \,with a new distance making \,$\mathcal{S}_{uc}$ \,complete. For this, notice that if \,$K \subset \Omega$ \,is a compact subset and \,${\bf f} = (f_n) \in \mathcal{S}_{uc}$, then
\begin{equation*}
  \|{\bf f}\|_K := \sup_{n \in \N}\sup_{z \in K} |f_n(z)|
\end{equation*}
is finite (use the continuity of each \,$f_n$ \,together with the uniform convergence on \,$K$). Choose any exhaustive sequence \,$(K_j)$ \,of compact subsets of \,$\Omega$ \,as in \cite[Theorem 13.3]{Rudin}. Defining
\begin{equation*}
  D({\bf f},{\bf g}) := \sum_{j=1}^{\infty}
  \frac{1}{2^j} \cdot \frac{\|{\bf f} - {\bf g}\|_{K_j}}{1+ \|{\bf f} - {\bf g}\|_{K_j}},
\end{equation*}
then it is an easy (long, but standard) exercise to prove that \,$D$ \,is a complete distance on \,$\mathcal{S}_{uc}$ \,(in fact, \,$D$ \,defines a topology that is finer than the inherited from \,$H(\Omega )^{\N}$). Therefore, \,$(\mathcal{S}_{uc},D)$ \,is a Baire space.

\vskip 3pt

We now can assert that the sequences converging compactly to zero do not generically converge uniformly. To be more precise, the set \,$\mathcal{S}_{uc} \setminus \mathcal{S}_u$ \,is {\it residual in \,$(\mathcal{S}_{uc},D)$}, that is, its complement \,$\mathcal{S}_u$ \,is of first category or, that is the same, $\mathcal{S}_u$ \,is contained in some \,$F_\sigma$-set (i.e.~a countable union of closed sets) with empty interior. Let us show why this is so. By the Baire category theorem, it is enough to prove the existence of countably many closed subsets \,$\mathcal{A}_k \subset \mathcal{S}_{uc}$ \,such that \,$\mathcal{S}_u \subset \displaystyle{\bigcup_{k \in \N} \mathcal{A}_k}$ \,and \,$(\mathcal{A}_k)^{\circ} = \varnothing$ \,for all \,$k\in\N$. For this, let us define
\begin{equation*}
  \mathcal{A}_k := \{ {\bf f} = (f_n) \in \mathcal{S}_{uc} : \, |f_k(z)| \leq 1
  \text{ for all } n \geq k \text{ and all }  z \in \Omega \}.
\end{equation*}
Then the inclusion \,$\mathcal{S}_u \subset \displaystyle{\bigcup_{k \in \N} \mathcal{A}_k}$ \,follows directly from the uniform convergence to \,$0$. Since \,$(\mathcal{S}_{uc},D)$ \,carries a topology that is stronger than the product topology and compact convergence is stronger than pointwise convergence, it follows that, for each \,$(n,z) \in \N \times \Omega$, the mapping
\begin{equation*}
  \Phi_{n,z} : \, {\bf f} \in \mathcal{S}_{uc} \longmapsto |f_n(z)| \in \R
\end{equation*}
is continuous for the metric \,$D$. But \,$\mathcal{A}_k = \displaystyle{\bigcap_{z \in \Omega \atop n \ge k} \Phi_{n,z}^{-1}([0,1])}$, which shows that each of these sets is closed in \,$(\mathcal{S}_{uc},D)$.

\vskip 3pt

Now, assume, by way of contradiction, that \,$(\mathcal{A}_k)^\circ \neq \varnothing$ \,for some \,$k\in\N$. Then there would exist a basic open subset \,$U = U({\bf f},K,\varepsilon)$ \,with \,$U\subset\mathcal{A}_k$, where \,${\bf f} \in \mathcal{S}_{uc}$, $K$ \,is a compact subset of \,$\Omega$, $0<\varepsilon<1$, and
\begin{equation*}
  \phantom{aaaaa} U := \big\{ {\bf g}=(g_n) \in \mathcal{S}_{uc} : |g_n(z) - f_n(z)| < \varepsilon \, \text{ for all } n \in \N \text{ and all } z \in K \big\}.
\end{equation*}
Let \,$(K_n)$ \, be any exhaustive sequence of compact subsets of \,$\Omega$ \,with the properties given in \cite[Theorem 13.3]{Rudin}. Then there is \,$n_0\in\N$ \,such that \,$K\subset K_n$ \,for all \,$n\geq n_0$. We define \,$g_n=f_n$ \,for each \,$n<n_0$.

\vskip 3pt

Let \,$E$ \,be a set defined by choosing one point at each connected component of \,$\C_\infty\setminus \Omega$. Hence, $E$ \,has a point at every connected component of \,$\C_\infty\setminus K_n$ \,for all \,$n\in\N$. For each \,$n\geq n_0$, we select a point \,$z_n \in \Omega \setminus K_n$ \,and a real number \,$r_n>0$ \,with the following properties:
\begin{itemize}
  \item $D(z_n,r_n)\subset\Omega$.
  \item $\big(K_n+D(0,r_n)\big)\subset \Omega$.
  \item $\big(K_n+D(0,r_n)\big)\cap D(z_n,r_n)=\varnothing$.
\end{itemize}
Set \,$W_n := \big(K_n+D(0,r_n)\big)\cup D(z_n,r_n)$ \,and \,$L_n := K_n \cup \{z_n\}$. Then \,$L_n$ \,is a compact subset of the open set \,$W_n$ \,and \,$E$ \,has a point at each connected component of \,$\C_{\infty}\setminus L_n$. For each \,$n\geq n_0$, define the function \,$h_n: W_n \to \C$ \,by
\begin{equation*}
  h_n(z) =
  \begin{cases}
    f_n(z) & \text{if } z \in K_n+D(0,r_n) \\
    2      & \text{if } z \in D(z_n,r_n).
  \end{cases}
\end{equation*}
At this point, the Runge approximation theorem yields the existence of a rational function \,$g_n$ \,whose poles only lie in \,$E$ \,such that \,$|g_n(z)-h_n(z)|<\frac{\varepsilon}{n}$ \,for all \,$z\in L_n$. In particular, $g_n \in H(\Omega)$. Moreover, we have
\begin{equation}\label{Residual |g-f|<epsilon/n}
  |g_n(z)-f_n(z)| < \frac{\varepsilon}{n} \, \text{ for all } z\in K_n \, \text{ and all } n\geq n_0
\end{equation}
and
\begin{equation}\label{Residual |g-2|<1}
  |g_n(z_n)-2| < \varepsilon < 1 \, \text{ for all } n\geq n_0.
\end{equation}
On the one hand, from \eqref{Residual |g-f|<epsilon/n} and the facts that \,${\bf f} \in \mathcal{S}_{uc}$ \,and that every compact subset of \,$\Omega$ \,is eventually contained in all \,$K_n$'s, it follows easily that \,${\bf g} := (g_n) \in \mathcal{S}_{uc}$ \,and even \,${\bf g} \in U$. On the other hand, we derive from \eqref{Residual |g-2|<1} that \,$|g_n(z_n)|>1$ \,for all \,$n\geq n_0$, which implies that \,${\bf g} \not\in \mathcal{A}_k$. Therefore, \,$U \not\subset \mathcal{A}_k$ \,and this is the sought-after contradiction.
\end{remark}

\begin{question}
In connection with the preceding remark, we want to raise the following question: What is the topological structure as well as the topological size (in some precise sense) of \,$\mathcal{S}_p \setminus \mathcal{S}_{uc}$ \,as a subset of \,$H(\Omega)^{\N}$?
\end{question}

\vskip 3pt

\noindent {\bf Acknowledgments.} This paper was started during a stay of the third author at the Instituto de Matem\'aticas de la Universidad de Sevilla (IMUS). The third author wants to thank this institution for its support and hospitality.

\vskip 3pt

\noindent {\bf Authors contribution.} All four authors have contributed equally to this work.

\vskip 3pt

\end{document}